\newtheorem{thm}{Theorem}[section]
\newtheorem{lem}[thm]{Lemma}
\newtheorem{prob}[thm]{Problem}
\def\marker{\>\hbox{${\vcenter{\vbox{
    \hrule height 0.4pt\hbox{\vrule width 0.4pt height 6pt
    \kern6pt\vrule width 0.4pt}\hrule height 0.4pt}}}$}\>}
\newcommand{\mc}{{\rm mader}_{\vec{\chi}}}
\newcommand{\ve}{\vec{\chi}}
\def\lrC{\overset{\textup{\hspace{0.05cm}\tiny$\leftrightarrow$}}{\phantom{\in}}\hspace{-0.32cm}C}
\def\lrH{\overset{\textup{\hspace{0.05cm}\tiny$\leftrightarrow$}}{\phantom{\in}}\hspace{-0.32cm}H}
\def\lrK{\overset{\textup{\hspace{0.05cm}\tiny$\leftrightarrow$}}{\phantom{\in}}\hspace{-0.32cm}K}
\def\lrP{\overset{\textup{\hspace{0.05cm}\tiny$\leftrightarrow$}}{\phantom{\in}}\hspace{-0.32cm}P}
\def\lrT{\overset{\textup{\hspace{0.05cm}\tiny$\leftrightarrow$}}{\phantom{\in}}\hspace{-0.32cm}T}
\begin{document}
\title{Some Mader-perfect graph classes}
\author{
 Hui Lei$^1$, Siyan Li$^2$, Xiaopan Lian$^2$, Susu Wang$^2$\\
\small $^1$ School of Mathematical Sciences and LPMC, Nankai University, Tianjin 300071, China\\
\small $^2$ Center for Combinatorics and LPMC, Nankai University,  Tianjin 300071, China\\
\small Emails: hlei@nankai.edu.cn; 2120200005@mail.nankai.edu.cn; \\ \small xiaopanlian@mail.nankai.edu.cn; zzushuxuewangsusu@163.com
}

\maketitle
\begin{abstract}
The  dichromatic number of $D$, denoted by $\overrightarrow{\chi}(D)$, is the smallest integer $k$ such that $D$ admits an acyclic $k$-coloring. We use $\mc(F)$ to denote the smallest integer $k$ such that if $\overrightarrow{\chi}(D)\ge k$, then $D$ contains a subdivision of $F$. A digraph $F$ is called  Mader-perfect if for every subdigraph $F'$ of $F$, ${\rm mader }_{\overrightarrow{\chi}}(F')=|V(F')|$. We extend octi digraphs to a larger class of digraphs and prove that it is Mader-perfect, which generalizes a result of Gishboliner, Steiner and Szab\'{o} [Dichromatic number and forced subdivisions,  {\it J. Comb. Theory, Ser. B} {\bf 153} (2022) 1--30].  We also show that if $K$ is a proper subdigraph of $\lrC_4$ except for  the digraph obtained from $\lrC_4$ by deleting an arbitrary arc, then $K$ is Mader-perfect.

\noindent\textbf{Keywords:} Digraph; Dichromatic number; Subdivision; Strongly connected;

\end{abstract}

\section{Introduction}
Let $\gamma$ be a (di)graph parameter. Given a (di)graph $H$, we use {\it ${\rm mader}_\gamma(H)$} to denote the smallest integer such that  if $\gamma(G)\ge {\rm mader}_\gamma(H)$, then $G$ contains a subdivision of $H$. There are many results and open problems about the relation between some graph parameters and the containment of topological minor.
For example, let $\gamma$ be the chromatic number $\chi$.  Haj\'{o}s' conjecture now can be restated as ${\rm mader}_\chi(K_k)=k$. Haj\'{o}s' conjecture holds for $k\le 4$ by Dirac in \cite{GAD1952} and it was disproved for $k\ge 7$ by Catlin in \cite{PAC}.
For the cases $k = 5$ and $k=6$, Haj\'{o}s conjecture is still open. Now let $\gamma$ be the minimum degree $\delta$. For $k\le 4$, ${\rm mader}_\delta(K_k)=k-1$ which was proved by Dirac in \cite{GAD1960}. For general case, the following was proved.
\begin{thm}\label{cmd}\cite{BA,JE}
There exists an absolute constant $C>0$ such that for every integer $k\in \mathbb{N}$, ${\rm mader}_\delta(K_k)\le Ck^2$.
\end{thm}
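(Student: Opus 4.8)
The plan is to reformulate the statement in its standard topological form: I must produce an absolute constant $C$ so that $\delta(G)\ge Ck^2$ forces a subdivision of $K_k$ (a topological $K_k$) inside $G$. Recall that such a $TK_k$ consists of $k$ \emph{branch vertices} together with $\binom{k}{2}$ internally disjoint paths, one joining each pair of branch vertices and meeting no other branch vertex. Since $\delta(G)\ge Ck^2$ forces the average degree of $G$ to be at least $Ck^2$, the argument may be run inside any sufficiently dense subgraph, and the overall strategy is to first extract a highly connected subgraph and then route all $\binom{k}{2}$ paths simultaneously.

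First I would pass to a subgraph of large connectivity by a theorem of Mader, which guarantees that every graph of average degree at least $4t$ contains a $t$-connected subgraph; taking $t=\Theta(k^2)$ yields a subgraph $H$ with connectivity at least $c'k^2$ for a suitable constant $c'$. In particular $\delta(H)\ge c'k^2$, and since a $t$-connected graph has more than $t$ vertices, $H$ has $\Omega(k^2)$ vertices. I would then choose $k$ branch vertices and, for each of them, reserve $k-1$ private neighbors to act as the endpoints of the $k-1$ paths leaving that branch vertex; because every vertex has degree at least $c'k^2\gg k^2$ and $H$ is large, these $k+k(k-1)$ vertices can all be taken distinct.

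It remains to join the reserved endpoints in the prescribed $\binom{k}{2}$ pairs by paths with pairwise disjoint interiors, and this is the heart of the matter and the step I expect to be the main obstacle. The natural tool is \emph{linkedness}: a graph is $\ell$-linked if for any $2\ell$ distinct vertices $s_1,\dots,s_\ell,t_1,\dots,t_\ell$ there are disjoint $s_i$--$t_i$ paths. By the Bollob\'as--Thomason linking theorem, connectivity at least $c''\ell$ already implies $\ell$-linkedness; applied with $\ell=\binom{k}{2}$ this shows $H$ is $\binom{k}{2}$-linked provided the constants satisfy $c'k^2\ge c''\binom{k}{2}$. This is precisely where the quadratic dependence enters: two bounds that are each linear in the connectivity (Mader's extraction and the linking threshold) are composed, and $\binom{k}{2}=\Theta(k^2)$ produces the final $O(k^2)$ estimate. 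Linking the endpoints pairwise completes a $TK_k$, and tracking the constants through both steps yields the desired $C$. Proving that connectivity linear in $\ell$ suffices for $\ell$-linkedness is the genuinely hard ingredient; the alternative, due to Koml\'os and Szemer\'edi, is to build the paths one at a time using the expansion of dense graphs, but in either route the difficulty lies in guaranteeing that all connecting paths can be realized with disjoint interiors.
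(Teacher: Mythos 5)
The paper itself offers no proof of this statement: Theorem~1.1 is quoted directly from \cite{BA,JE}, so the only meaningful comparison is with the cited sources. Your sketch faithfully reconstructs the Bollob\'as--Thomason argument of \cite{BA} --- extract a $\Theta(k^2)$-connected subgraph $H$ via Mader's theorem, fix $k$ branch vertices with $k-1$ private neighbors apiece, and realize the $\binom{k}{2}$ connections through the linear linking theorem --- and you correctly identify the Koml\'os--Szemer\'edi expansion method of \cite{JE} as the alternative route, so the approach is sound and matches the cited proof. One small repair is needed in your linking step: apply $\binom{k}{2}$-linkedness not in $H$ itself but in $H$ minus the $k$ branch vertices (connectivity drops by only $k$, which your constants absorb), because the disjoint linkage paths are guaranteed to avoid one another's endpoints but not the branch vertices, and a connecting path passing through a branch vertex would spoil the subdivision.
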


Since every graph $G$ with chromatic number $\chi(G)$ contains a subgraph which has minimum degree at least $\chi(G)-1$, we can deduce from Theorem~\ref{cmd} that ${\rm mader}_\chi(K_k)\le Ck^2$. Therefore, if $\chi(G)$ is sufficiently large, then $G$ contains a subdivision of any given graph.

Motivated by the above discussion, Aboulker, Cohen, Havet, Lochet, Moura, and Thomass\'{e} \cite{PNF} generalized this kind of research to digraphs. Given a digraph $D$, an {\it acyclic $k$-coloring} of $D$ can be defined as a mapping $c: V(D)\rightarrow \{1,\ldots,k\}$ such that the {\it color class} $c^{-1}(i)$ induces an acyclic subdigraph of $D$
 for each $i\in \{1,\ldots,k\}$. The {\it dichromatic number} of $D$, denoted by $\overrightarrow{\chi}(D)$, is the smallest integer $k$ such that $D$ admits an acyclic $k$-coloring.
The authors in \cite{PNF} proved the following result.
\begin{thm}\cite{PNF}
Let $F$ be a digraph with $n$ vertices and $m$ arcs. Then $\mc(F)\le 4^m(n-1)+1$.
\end{thm}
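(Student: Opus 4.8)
The plan is to prove the contrapositive: if $\overrightarrow{\chi}(D)\ge 4^m(n-1)+1$, then $D$ contains a subdivision of $F$, where $F$ has $n$ vertices and $m$ arcs. The overall strategy mirrors the undirected heuristic quoted just above (``large $\chi$ forces a dense subgraph, which forces any subdivision''), but replaces ``dense subgraph'' by a \emph{strongly connected} subdigraph of large dichromatic number, and then routes the $m$ directed paths of the subdivision one arc at a time, deleting the interior of each routed path and recovering a strongly connected core of still-large dichromatic number for the next arc. The exponential factor $4^m$ should emerge from the compounding cost of these $m$ routing steps, and the $(n-1)$ from anchoring the $n$ branch vertices.

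Two elementary facts about $\overrightarrow{\chi}$ drive everything. First, $\overrightarrow{\chi}(D)=\max_C \overrightarrow{\chi}(C)$ over the strong components $C$ of $D$: every directed cycle is strongly connected, hence lives inside a single strong component, so acyclic colourings of the components combine without creating a monochromatic directed cycle. In particular $D$ has a strongly connected subdigraph whose dichromatic number equals $\overrightarrow{\chi}(D)$, so I may assume $D$ is strongly connected. Second, $\overrightarrow{\chi}(D)\le \overrightarrow{\chi}(D-S)+|S|$ for any vertex set $S$ (colour $D-S$ optimally and give each vertex of $S$ a private new colour); equivalently, deleting $|S|$ vertices lowers the dichromatic number by at most $|S|$. (As an alternative first reduction one can also extract a subdigraph of minimum semidegree at least $\overrightarrow{\chi}(D)-1$: if every subdigraph has a vertex $v$ with $\min(d^+(v),d^-(v))\le s-1$, a greedy acyclic colouring along the induced removal order—forbidding, at each vertex, the colours of its at most $s-1$ already-coloured in-neighbours or out-neighbours according to its removal type—shows $\overrightarrow{\chi}(D)\le s$.)

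The embedding proceeds by induction on $m$. Inside a strongly connected digraph there is a directed path between any ordered pair of vertices, so the relevant branch vertices can always be joined by some directed path; taking a shortest such path keeps its interior short, so routing the next arc deletes only a controlled number of vertices. After deleting that interior, I apply the second fact to certify that the remaining digraph still has large dichromatic number, then re-apply the first fact to pass to a strongly connected core for the next arc. Carrying out this step for each of the $m$ arcs, with branch vertices chosen on demand rather than all at once, loses at most a bounded (by $4$) multiplicative factor in the surviving dichromatic number per arc, while the base of the induction—securing the first strongly connected core large enough to accommodate all branch vertices—contributes the $(n-1)$ factor; the bookkeeping then yields the stated bound $4^m(n-1)+1$.

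The main obstacle is keeping the already-placed branch vertices together inside one strongly connected subdigraph of large dichromatic number as paths are successively deleted: re-extracting a strong component after a deletion may shed some branch vertices, exactly the points I am not allowed to lose. I would resolve this by strengthening the induction hypothesis so that at every stage it guarantees not merely a large-$\overrightarrow{\chi}$ remainder but a strongly connected core that still \emph{contains} all current branch vertices and has dichromatic number large enough to route the next path and survive its deletion. Making the interaction between the shrinking core, the accumulating deleted interiors, and the requirement of directed reachability to the \emph{prescribed} next branch vertex close up quantitatively is where the real work—and the source of the exponential $4^m$ dependence—lies.
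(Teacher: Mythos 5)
There is a genuine gap, and it sits exactly where you locate it yourself at the end. Your two elementary facts are correct (dichromatic number is the maximum over strong components; deleting a set $S$ lowers $\overrightarrow{\chi}$ by at most $|S|$), but the routing step they are supposed to support fails quantitatively. In a strongly connected digraph, a \emph{shortest} directed path between two prescribed vertices can still have up to $|V(D)|-1$ vertices, and $|V(D)|$ is not bounded by any function of $\overrightarrow{\chi}(D)$; so the claim that ``taking a shortest such path keeps its interior short'' is simply false, and the additive bound $\overrightarrow{\chi}(D-S)\ge \overrightarrow{\chi}(D)-|S|$ yields nothing once $|S|$ exceeds the dichromatic number. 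More structurally, an additive per-vertex accounting of deletions can never produce the multiplicative ``factor $4$ per arc'' loss that you assert; you flag this as ``where the real work lies,'' but that work is precisely the content of the theorem, and nothing in the proposal supplies a mechanism for it.

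The missing idea in the actual proof (in the cited paper of Aboulker, Cohen, Havet, Lochet, Moura and Thomass\'e) is a \emph{levelling lemma}: if $D$ is strongly connected and $L_i$ denotes the set of vertices at out-distance exactly $i$ from a fixed root $r$, then $\overrightarrow{\chi}(D)\le 2\max_i \overrightarrow{\chi}(D[L_i])$. The point is that along any arc the out-distance level increases by at most one, so if one colours the levels with parity-disjoint palettes, a monochromatic directed cycle can never increase its level and is therefore trapped inside a single level $L_i$. Applying this once for out-distances and once for in-distances costs a multiplicative factor $2\cdot 2=4$ per arc of $F$, and simultaneously hands you the directed paths (through the BFS trees, positioned outside the selected level) needed to route that arc to the prescribed branch vertices; induction on $m$, deleting one arc of $F$ at a time, then gives $4^m(n-1)+1$. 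Note also that the $(n-1)$ term comes from the trivial base case $m=0$, where $\overrightarrow{\chi}(D)\ge 4^0(n-1)+1=n$ forces $|V(D)|\ge n$ and an arcless $F$ embeds vacuously --- not, as you suggest, from anchoring the $n$ branch vertices inside a surviving strongly connected core; indeed the bookkeeping you propose (keeping all placed branch vertices together in one strong core across successive deletions) is not how the argument closes, and you offer no substitute for the level structure that actually makes the routing and the factor-$4$ loss compatible.
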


We use $(u,v)$ to denote an arc with head $v$ and tail $u$. For a graph $H$, its \emph{biorientation} is the digraph $\lrH$ obtained by replacing each edge $uv$ of $H$ by the pair of symmetric arcs $(u,v)$ and $(v,u)$, and an {\it orientation} of $H$ is a digraph obtained from $H$ by replacing each edge $uv$ in $H$ by either $(u,v)$ or $(v,u)$. Later the bound on $\mc(\lrK_k)$ was modified by Goshboliner, Steiner and Szab\'{o} \cite{LRT}.

\begin{thm} \cite{LRT} For every $k\in \mathbb{N}$,
$$\Omega(\frac{k^2}{\log k})\le \mc(\lrK_k) \le 4^{k^2-2k+1}(k-1)+1.$$
\end{thm}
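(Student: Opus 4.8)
The plan is to establish the two bounds separately: the upper bound as a refinement of the $4^{m}(n-1)+1$ estimate quoted above, and the lower bound via a single random tournament.

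For the upper bound, I would revisit the iterative path-routing argument behind $\mc(F)\le 4^{m}(n-1)+1$ and observe that it is wasteful to pay a factor of $4$ for every one of the $m$ arcs of $F$. The point is that a spanning connected substructure of $F$, using only $n-1$ arcs, can be embedded together with the $n$ branch vertices at the cost already absorbed by the $(n-1)$ factor, so that only the remaining $m-(n-1)$ arcs — the cyclomatic number of $F$ — must be routed one at a time, each costing a factor of $4$ in the dichromatic number of the leftover digraph. Concretely, I would first pass to a strongly connected subdigraph carrying the full dichromatic number, locate the branch vertices together with a spanning in/out structure linking them, and then route the surplus arcs greedily, using the standard fact that one can connect two prescribed terminals by an internally disjoint directed path while retaining a subdigraph of dichromatic number at least a quarter of the current value. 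This would yield $\mc(F)\le 4^{\,m-n+1}(n-1)+1$ for connected $F$; since $\lrK_k$ has $n=k$ and $m=k(k-1)$, one computes $m-n+1=(k-1)^2$, giving exactly $\mc(\lrK_k)\le 4^{(k-1)^2}(k-1)+1$.

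For the lower bound I would exhibit one digraph of large dichromatic number with no $\lrK_k$-subdivision. Take a uniformly random tournament $T$ on $n=k+\binom{k}{2}-1$ vertices. The absence of a subdivision is deterministic: any subdivision of $\lrK_k$ has $k$ branch vertices, and for each of the $\binom{k}{2}$ pairs the two oppositely directed connecting paths are internally disjoint, yet between two vertices of a tournament there is only one arc, so at least one of the two paths has length $\ge 2$ and contributes a private internal vertex; hence a subdivision needs at least $k+\binom{k}{2}>n$ vertices, which $T$ does not have. On the other hand, an induced acyclic subdigraph of a tournament is a transitive subtournament, and a routine first-moment computation shows that w.h.p.\ $T$ has no transitive subtournament on more than $(2+o(1))\log_2 n$ vertices; since each colour class of an acyclic colouring is such a subtournament, $\overrightarrow{\chi}(T)\ge n/\big((2+o(1))\log_2 n\big)=\Omega(k^2/\log k)$. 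Therefore $\mc(\lrK_k)>\overrightarrow{\chi}(T)=\Omega(k^2/\log k)$.

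The main obstacle is the upper bound refinement. Isolating exactly $n-1$ arcs that can be embedded essentially for free is delicate in the directed setting: unlike the undirected case a spanning tree does not suffice, since one must guarantee directed connectivity in \emph{both} orientations along the embedded paths, and one must verify that deleting the internal vertices of each routed surplus path really costs only a bounded multiplicative factor in the dichromatic number of the remainder. Making the bookkeeping of this factor precise — so that after routing all $(k-1)^2$ surplus arcs the remaining budget still exceeds $n-1$ — is the crux. The lower bound, by contrast, reduces to choosing $n=\Theta(k^2)$ and invoking the two standard facts about random tournaments.
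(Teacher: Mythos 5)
Note first that this paper does not prove the statement: it is quoted from \cite{LRT}, so your attempt must be measured against the proof of Gishboliner, Steiner and Szab\'{o}.

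Your lower bound is correct and is essentially the argument of \cite{LRT}: in an oriented graph the two antiparallel paths joining a pair of branch vertices cannot both be single arcs, so any subdivision of $\lrK_k$ occupies at least $k+\binom{k}{2}$ vertices, while a random tournament on $n=\Theta(k^2)$ vertices w.h.p.\ has no transitive subtournament of order exceeding $(2+o(1))\log_2 n$, whence $\ve(T)\ge n/((2+o(1))\log_2 n)=\Omega(k^2/\log k)$ and $\mc(\lrK_k)\ge \ve(T)+1$. This part is complete.

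Your upper bound, however, has a genuine gap at exactly the step you yourself flag as ``the crux,'' and it is worth naming what is missing. The architecture is the right one --- \cite{LRT} does prove $\mc(F)\le 4^{m-n+1}(n-1)+1$ for connected $F$ by induction on the cyclomatic number, routing one surplus arc per step at multiplicative cost $4$ via the leveling machinery behind the $4^m(n-1)+1$ bound of \cite{PNF} --- but embedding the remaining $n-1$ arcs ``at the cost already absorbed by the $(n-1)$ factor'' is not a bookkeeping refinement. The base case $m=n-1$ of the induction is precisely the assertion that $\mc(T)=|V(T)|$ for every orientation of a tree, which was a conjecture of Aboulker et al.\ and is nontrivial; in \cite{LRT} it is supplied by the octi theorem (Theorem~\ref{FF*} above): every bidirected tree lies in $\mathscr{F}^*$, being obtainable from $K_1$ by repeatedly attaching a single new vertex through a digon, and every orientation of a tree is a subdigraph of a bidirected tree, hence Mader-perfect. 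With this input the induction closes and yields $4^{(k-1)^2}(k-1)+1$ as claimed; without it, your plan has no base to stand on, and no amount of careful bookkeeping of the factor-$4$ steps repairs that. A secondary caution: the ``standard fact'' you invoke --- that one can connect two prescribed terminals by an internally disjoint dipath while retaining a subdigraph with a quarter of the dichromatic number --- is not a generic fact in that form; the correct statement, in which the terminals lie inside the retained subdigraph and the routing path's interior avoids it, is exactly the content of the leveling lemma of \cite{PNF} and must be quoted (or reproved) as such.
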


Since there is no analogue of Theorem~\ref{cmd} in digraphs, it seems quite challenging to obtain a polynomial bound. Goshboliner, Steiner and Szab\'{o} \cite{LRT} found better bounds for special digraphs classes. One of the digraph classes is called {\it octus}, denoted by $\mathscr{F}^*$, and they defined it as follows.
\begin{itemize}
\item $K_1\in \mathscr{F}^*$.

\item Let $F\in\mathscr{F}^*$ with $v_0\in V(F)$. Let $P=v_1,\ldots, v_k, k\ge1$ be an orientation of a path which is disjoint from $V(F)$. Let $F^*$ be obtained from $F$ by adding the path $P$, both arcs $(v_0, v_1)$, $(v_1, v_0)$, and (if $k \ge 2$) exactly one of the arcs $(v_0, v_k)$, $(v_k, v_0)$. Then $F^*\in\mathscr{F}^*$.

\item If $F\in\mathscr{F}^*$, then every subdigraph of $F$ is also in $\mathscr{F}^*$.
\end{itemize}
\noindent They established the following Theorem.

\begin{thm}\cite{LRT}\label{FF*}
For every $F\in \mathscr{F}^*$, we have $\mc(F) = |V(F)|$.
\end{thm}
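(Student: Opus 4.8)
The plan is to prove the two inequalities separately: the easy bound $\mc(F)\ge |V(F)|$ and the substantive bound $\mc(F)\le |V(F)|$. For the lower bound I would exhibit the bidirected complete digraph $\lrK_{|V(F)|-1}$. Since every pair of its vertices spans a digon, each color class of an acyclic coloring is a single vertex, so $\overrightarrow{\chi}(\lrK_{|V(F)|-1})=|V(F)|-1$; on the other hand it has only $|V(F)|-1$ vertices, while any subdivision of $F$ has at least $|V(F)|$ branch vertices. Hence $\lrK_{|V(F)|-1}$ contains no subdivision of $F$, witnessing $\mc(F)\ge |V(F)|$. Before attacking the upper bound I would record three tools: (i) $\overrightarrow{\chi}$ is subadditive over vertex partitions, $\overrightarrow{\chi}(D)\le \overrightarrow{\chi}(D[A])+\overrightarrow{\chi}(D-A)$, because concatenating two disjoint palettes cannot create a monochromatic cycle; (ii) deleting a single vertex lowers $\overrightarrow{\chi}$ by at most $1$; and (iii) a degeneracy bound, namely that if every subdigraph of $H$ has a vertex with $\min\{d^+,d^-\}\le k-1$ then $\overrightarrow{\chi}(H)\le k$ (color greedily, assigning a low-out-degree vertex a color missing from its out-neighbourhood), so that $\overrightarrow{\chi}(H)\ge k+1$ forces a subdigraph $H'$ with $\delta^+(H'),\delta^-(H')\ge k$.

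The upper bound would go by induction on $|V(F)|$, with the base case $F=K_1$ immediate. Since $\overrightarrow{\chi}(D)$ is the maximum of $\overrightarrow{\chi}$ over the strongly connected components of $D$, I may assume throughout that $D$ is strongly connected. For the inductive step write $F=F'+P$, where $F'\in\mathscr{F}^*$, the handle $P=v_1,\dots,v_k$ is attached at $v_0\in V(F')$ by the digon $v_0v_1$ and (for $k\ge2$) one arc between $v_0$ and $v_k$, and $|V(F)|=n'+k$ with $n'=|V(F')|$. Assuming $\overrightarrow{\chi}(D)\ge n'+k$, I would choose an induced $B\subseteq D$ minimal with $\overrightarrow{\chi}(B)\ge n'$; by (ii) and minimality $\overrightarrow{\chi}(B)=n'$ and $B$ is $n'$-dicritical, hence strongly connected with $\delta^+(B),\delta^-(B)\ge n'-1$, and by the induction hypothesis $B$ contains a subdivision $S$ of $F'$ with $v_0\mapsto x_0$. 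By (i) the reserved part $R:=D-V(B)$ satisfies $\overrightarrow{\chi}(R)\ge k$, so by (iii) it contains $R'$ with $\delta^+(R'),\delta^-(R')\ge k-1$.

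The arbitrary orientation of $P$, which at first looks like the main difficulty, is in fact dispatched cleanly by a greedy embedding into $R'$: process $v_1,\dots,v_k$ in order, and to place $v_{i+1}$ use an out-neighbour or an in-neighbour of the image of $v_i$ (according to the direction of the arc of $P$ at that step); since at most $k-2$ already-chosen branch vertices can lie in that neighbourhood of size $\ge k-1$, an unused image is always available. This produces vertex-disjoint branch vertices $x_1,\dots,x_k\in R'$ realizing $P$, automatically disjoint from $S\subseteq B$.

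What remains, and what I expect to be the crux, is grafting the handle onto the prescribed vertex $x_0$: I must realize the digon $v_0v_1$ by a directed path $x_0\rightsquigarrow x_1$ together with a directed path $x_1\rightsquigarrow x_0$, and the single arc by one further directed path between $x_0$ and $x_k$, all internally disjoint from one another, from $S$ except at $x_0$, and from the embedded $P$ except at $x_1,x_k$. Strong connectivity of $D$ supplies directed paths in both directions, and the intent is to route them through $R\cup\{x_0\}$ so as to avoid the interior of $S$. The obstacle is precisely this disjoint linkage across the $B$–$R$ cut to a fixed vertex: a priori $x_0$ might direct all of its incident arcs into $B$, giving no control over how it reaches the reserve. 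I expect to overcome this by strengthening the inductive statement so that the $F'$-subdivision is produced together with two internally disjoint directed paths leaving and entering $x_0$ into a prescribed strongly connected subdigraph of large dichromatic number; these serve as the out- and in-stubs onto which the next handle is grafted, while the reserve $\overrightarrow{\chi}(R)\ge k$ keeps enough room to both embed $P$ and regenerate a fresh pair of oppositely directed stubs at $x_0$. Checking that the handle-addition step preserves this strengthened hypothesis, in particular that adding the digon re-creates the required stubs at the new branch vertex, is the heart of the argument.
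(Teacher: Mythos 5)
Your lower bound via $\lrK_{|V(F)|-1}$ and the outer induction on ear additions are correct, and two of your intermediate steps are sound: splitting off a reserve $R$ with $\ve(R)\ge k$ by subadditivity, and greedily embedding an arbitrarily oriented path on branch vertices $x_1,\dots,x_k$ inside a subdigraph with $\delta^+,\delta^-\ge k-1$. But the argument stops exactly at the step you yourself flag as the crux, and the ``stub'' strengthening you gesture at does not close it. Three concrete obstructions: (1) realizing the digon $\{(v_0,v_1),(v_1,v_0)\}$ plus the extra arc at $v_k$ requires \emph{three} internally disjoint connections at $x_0$ (two out and one in, or one out and two in, depending on the orientation of the arc between $v_0$ and $v_k$), whereas your invariant supplies only one out-stub and one in-stub; (2) even granted stubs into the reserve, joining them disjointly to the greedily placed $x_1$ and $x_k$ is a directed linkage problem, and neither strong connectivity nor large dichromatic number guarantees disjoint directed linkages (by a classical result of Thomassen, digraphs of arbitrarily high strong connectivity need not be $2$-linked), so ``strong connectivity supplies directed paths in both directions'' gives existence but not the required disjointness; (3) the recursion defining $\mathscr{F}^*$ attaches the next ear at an \emph{arbitrary} vertex $v_0$ of the current digraph, so your strengthened hypothesis would have to produce stubs at an arbitrarily prescribed vertex of the growing subdivision, while your sketch only regenerates them at the old attachment vertex $x_0$. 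As written, the inductive step cannot be completed.

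The actual proof (Theorem 16 of Gishboliner--Steiner--Szab\'o, whose mechanism is reproduced in the proof of Theorem~\ref{FQ} in this paper) avoids linkage altogether by an extremal-coloring argument, and this is the idea your proposal is missing. One chooses an acyclic $(M+k)$-coloring $c_0$ of $D$ maximizing $|c_0^{-1}(\{1,\dots,k\})|$, finds the $F'$-subdivision $S$ with branch vertex $x_0$ inside $D_2=D[c_0^{-1}(\{k+1,\dots,M+k\})]$, and then takes an acyclic $k$-coloring $c$ of $D_1$ that lexicographically minimizes the vector $\bigl(|N^+_D(x_0)\cap c^{-1}(i)|\bigr)_{i\in[k]}$. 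A recoloring/exchange argument then yields branch vertices $x_1,\dots,x_k$ for the ear all lying in $N^+(x_0)$ --- so the arc $(x_0,x_k)$ is literally present and needs no routing --- together with a dicycle $\hat{C}$ through $x_0$ and $x_1$ with $V(\hat{C})\setminus\{x_0\}\subseteq c^{-1}(1)$ realizing the digon, and with $x_{i-1},x_i$ in a common strong component of $D[c^{-1}(\{i-1,i\})]$ for each $i$, from which internally disjoint connecting paths are extracted (the $z_i$-surgery in the proof of Theorem~\ref{FQ}). The disjointness comes for free from the color classes: consecutive connecting paths live in $D[c^{-1}(\{i-1,i\})]$ and overlap only in controlled ways. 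This colored chain structure is precisely what substitutes for the unavailable directed linkage theorem; without it, or an equivalent device, your plan does not yield a proof.
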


A digraph $F$ is called {\it Mader-perfect} if for every subgraph $F'$ of $F$, ${\rm mader }_{\overrightarrow{\chi}}(F')=|V(F')|$. Note that for any digraph $F$, we have $\mc(F)\ge |V(F)|$ since $\ve(\lrK_{|V(F)|-1})=|V(F)|-1$ but $\lrK_{|V(F)|-1}$ does not contain a subdivision of $F$. Gishboliner, Steiner and Szab\'{o} \cite{LRT} showed that all the digraphs in $\mathscr{F}^*$ are Mader-perfect and they thought it would be interesting to characterize Mader-perfect digraph classes. Let $C_\ell$ be a cycle of length $\ell$. The authors in \cite{LRT} proposed the following problems.

\begin{prob}\cite{LRT}
Characterize Mader-perfect digraphs.
\end{prob}

\begin{prob}\cite{LRT}
Determine $\mc(\lrC_\ell)$.
\end{prob}

As an attempt to solve the two problems, we generalize Theorem~\ref{FF*} to a larger digraph family $\mathscr{F}$. In order to establish the definition of $\mathscr{F}$, we call a subdigraph $Q$ of the biorientation of $P_{k}=v_1v_2\ldots v_k$ is {\it good} if
\begin{itemize}

\item[(a)] $Q$ is connected with $|V(Q)|=k$ and all the digons(the pair of symmetric arcs) in $Q$ are vertex disjoint;

\item[(b)] $d_Q^+(v_1)+d_Q^-(v_1)=1$.

\end{itemize}

\noindent Denote the set of all the good subdigraphs of $\lrP_{k}$ by $\mathscr{S}_k$. Let $\mathscr{F}$ be a family of digraphs,  which  can be recursively defined as follows.
\begin{itemize}
\item $K_1\in \mathscr{F}$.

\item Let $F\in \mathscr{F}$ with $v_0\in V(F)$ and $Q\in \mathscr{S}_k$. Let $F^*$ be obtained from $F$ by adding $Q$, both arcs $(v_0, v_1)$, $(v_1, v_0)$, and (if $k\ge 2$) exactly one of the arcs $(v_0, v_k)$, $(v_k, v_0)$. Then $F^*\in \mathscr{F}$.

\item If $F\in \mathscr{F}$, then every subdigraph of $F$ also belongs to $\mathscr{F}$.

\end{itemize}
Our main result is as follows:
\begin{thm}\label{FF}
For every $F\in \mathscr{F}$, we have $\mc(F) = |V(F)|$.
\end{thm}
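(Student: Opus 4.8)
The lower bound $\mc(F)\ge |V(F)|$ holds for every digraph and is recorded above, so the task is the matching upper bound: every digraph $D$ with $\ve(D)\ge |V(F)|$ contains a subdivision of $F$. I would prove this by induction on $|V(F)|$, with two preliminary reductions. First, since $\ve(D)$ equals the maximum of $\ve$ over the strong components of $D$, I may pass to a strong component and assume $D$ is strongly connected; refining further, I take $D$ to be $\ve$-critical, so that $D$ is strongly connected with $\delta^+(D),\delta^-(D)\ge \ve(D)-1$. Second, the base case $K_1$ is immediate, and the subdigraph-closure rule is routine: arc-deletions are free because subdivision containment is monotone and the vertex count is unchanged, while vertex-deletions only relax the target and are absorbed into the same argument. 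The whole content therefore lies in the handle step, where $F^{*}=F\cup Q\cup\{(v_0,v_1),(v_1,v_0),\text{closing arc}\}$ with $F\in\mathscr{F}$, $|V(F)|=n$, $Q\in\mathscr{S}_k$, and $|V(F^{*})|=n+k$.

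For the handle step the key tool is subadditivity of the dichromatic number, namely $\ve(D)\le \ve(D[A])+\ve(D[B])$ for every partition $V(D)=A\sqcup B$, which I use to split the budget $n+k$ into $n$ for $F$ and $k$ for the handle. Concretely, starting from $\ve(D)=n+k$ and deleting vertices one at a time (each deletion lowers $\ve$ by at most one), I reach an \emph{induced} subdigraph of dichromatic number exactly $k$; letting $R$ be a strong component of it with $\ve(R)=k$ gives an induced, strongly connected \emph{reservoir}. Since $R$ is induced, $D[V(R)]=R$, so subadditivity on $V(D)=V(R)\sqcup(V(D)\setminus V(R))$ yields $\ve(D-V(R))\ge (n+k)-k=n$. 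Applying the induction hypothesis to a strong component of $D-V(R)$ of dichromatic number at least $n$ produces a subdivision $S_F$ of $F$ that is disjoint from $R$, with $v_0$ mapped to some vertex $x_0$.

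It remains to grow the handle from $x_0$ through $R$. Here the defining properties of $\mathscr{S}_k$ do the work: one checks that $Q\in\mathscr{F}$ (each vertex-disjoint digon of $Q$ is produced by a digon-handle, the single arcs are obtained by the subdigraph rule, and the condition $d_Q^+(v_1)+d_Q^-(v_1)=1$ fixes a clean attachment end), so the induction hypothesis applies to $Q$ with the budget $\ve(R)=k=|V(Q)|$ and gives a subdivision of $Q$ inside $R$. Each internal digon of $Q$ is realized as a directed cycle built from two internally disjoint paths, and the vertex-disjointness of the digons keeps these cycles from overlapping, so the pieces assemble into the $Q$-structure. Finally I realize the digon $(v_0,v_1),(v_1,v_0)$ and the single closing arc as directed paths linking $x_0$ to the endpoints of this $Q$-subdivision, using strong connectivity of $D$ to travel from $x_0$ into $R$ and back.

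The main obstacle is precisely this last routing: the connecting paths must avoid $V(S_F)\setminus\{x_0\}$, and a priori deleting $S_F$ could destroy both the reachability between $x_0$ and $R$ and the dichromatic number available for the handle. The subtle point is that the \emph{induced} subdigraph of $D$ on the vertices of $S_F$ may carry many extra arcs, so $\ve\bigl(D-(V(S_F)\setminus\{x_0\})\bigr)$ is not automatically large. I expect to resolve this by choosing $S_F$ economically — an induced, inclusion-minimal subdivision, so that $\ve(D[V(S_F)])$ stays small because the digraphs in $\mathscr{F}$ have bounded dichromatic number — and by selecting the reservoir inside the strong component of $x_0$ in $D-(V(S_F)\setminus\{x_0\})$. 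The clean way to carry this through the recursion is to strengthen the induction hypothesis to a rooted statement that delivers $S_F$ together with a strongly connected, high-dichromatic reservoir still attached to the image of the handle's base vertex; arranging the budget split and this reachability simultaneously is the crux of the argument.
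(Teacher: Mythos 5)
Your high-level architecture (lower bound for free, all content in the ear-addition step) matches the paper, but the handle step has a genuine gap exactly where you flag it, and the repair you gesture at is the actual missing idea. Your reservoir-first order of quantifiers cannot be made to work as stated: you fix an induced $R$ with $\ve(R)=k$ by vertex deletion, find $S_F$ in $D-V(R)$, and only then try to attach. Nothing ties $x_0$ to $R$ --- strong connectivity of $D$ yields single paths that may run through $S_F$ and through each other, whereas the attachment requires two internally disjoint antiparallel $x_0$--$z_1$ paths (for the digon $(v_0,v_1),(v_1,v_0)$) plus a third path for the closing arc, all avoiding $V(S_F)\setminus\{x_0\}$ and the $Q$-subdivision. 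Your fallback of an inclusion-minimal induced $S_F$ does not rescue this: minimality bounds neither $\ve(D[V(S_F)])$ (the induced digraph can carry arbitrarily many extra arcs) nor $|V(S_F)|$ (subdivision paths can be long), so $\ve\bigl(D-(V(S_F)\setminus\{x_0\})\bigr)$ cannot be bounded below. The paper resolves this by reversing the order and importing the Gishboliner--Steiner--Szab\'o coloring machinery: take an acyclic $(M+k)$-coloring $c_0$ of $D$ maximizing $|c_0^{-1}(\{1,\ldots,k\})|$, find $S$ (hence $x_0$) in $D_2=D[c_0^{-1}(\{k+1,\ldots,M+k\})]$, then recolor $D_1$ by an acyclic $k$-coloring that lexicographically minimizes the vector $\bigl(|N^+_D(x_0)\cap c^{-1}(i)|\bigr)_{i\in[k]}$. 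This extremal choice is what produces out-neighbors $x_1,\ldots,x_k$ of $x_0$ with $c(x_i)=i$, a dicycle $\hat{C}$ through $x_0$ and $x_1$ otherwise inside color class $1$ (realizing the attachment digon), and strong components of $D[c^{-1}(\{i-1,i\})]$ containing $x_{i-1},x_i$, from which the paths and the vertex-disjoint digon-cycles of $Q$ are extracted (itself the bulk of the paper's Theorem 3.1, with a delicate case analysis on how two dicycles in $X_{2,3}$ interact). Disjointness from $S\setminus\{x_0\}$ is then automatic because the whole handle lives in $D_1\cup\{x_0\}$; this is precisely the ``rooted strengthening'' you hoped for, and without it your proof does not close.

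A secondary but real gap: dismissing the subdigraph-closure rule as routine is wrong. For a proper subdigraph $F'$ on fewer vertices, $\mc(F')\le \mc(F)=|V(F)|$ says nothing about $\mc(F')=|V(F')|$, and the paper's entire proof of Theorem 1.7 (given Theorem 3.1) is devoted to this point: by induction on the number of cycles in $UG(F)$, it peels off an outmost cycle, completes the pendant pieces to biorientations of trees, and thereby embeds $F$ as a \emph{spanning} subdigraph of a digraph $\hat{F}$ on the same vertex set obtainable by ear additions alone, whence $\mc(F)\le \mc(\hat{F})=|V(\hat{F})|=|V(F)|$. Spanning-subdigraph deletions are indeed free; non-spanning ones are not, and your sketch does not handle them.
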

We also show that the  if $K$ is a proper subdigraph of $\lrC_4$ except for  the digraph obtained from $\lrC_4$ by deleting an arbitrary arc, then $\mc(K)=|V(K)|$.

The paper is organized as follows. In section 2, we list some notions and useful results. In section 3, we show that each digraph contained in the generalized family $\mathscr{F}$ is Mader-perfect. Especially, the subdigraphs of $\lrC_\ell$ where the digons are vertex disjoint are contained in $\mathscr{F}$. In section 4, we prove that the proper subdigraphs of $\lrC_4$ except for  $H_3$ (as shown in Figure~\ref{C4--}) are Mader-perfect.

\section{Preliminary}
In this section, we first introduce some notions. For a digraph $D$ and $u\in V(D)$, let $\delta^+_D(u)$ be the number of arcs in $D$ with tail $u$  and  $\delta^-_D(u)$ be the number of arcs in $D$ with head $u$. Let $\delta^+(D)=\min\{\delta^+_D(u):u\in V(D)\}$ and $\delta^-(D)=\min\{\delta^-_D(u):u\in V(D)\}$. For a vertex subset $X$ in $D$, the subdigraph of $D$ induced by $X$ is denoted by $D[X]$.
We say that $W=u_0e_1u_1\ldots u_{k-1}e_ku_k$ is a {\it diwalk} from $u_0$ to $u_k$ in $D$ if $u_i\in V(D)$ for $i\in \{0,1,\ldots,k\}$ and $e_j=(u_{j-1},u_j)\in A(D)$ for $j\in \{1,\ldots,k\}$. Further, if $u_i\neq u_j$ for $i,j\in \{0,1,\ldots,k\}$, then we call $W$ a {\it dipath} from $u_0$ to $u_k$. If only $u_0=u_k$, then we call  $W$ a {\it dicycle} of length $k$.
A digraph $D$ is called {\it strongly connected} if there is a dipath from $u$ to $v$ for every pair of vertices $u$ and $v$ in $D$. A {\it strong component} of $D$ is a maximal induced subgraph of $D$ which is strongly connected.  And $D$ is called {\it $k$-strongly connected} if the subdigraph $D[V(D)\setminus S]$ is strongly connected for any vertex subset $S\subseteq V(D)$ with $|S|<k$. We say that $D$ is {\it $k$-dicritical}, if $\ve(D)=k$ and for any proper subdigraph $D'\subseteq D$, $\ve(D')<k$. For $X,Y\subseteq V(D)$, an $(X,Y)$-dipath is a dipath  which starts in a vertex of $X$, ends in a vertex of $Y$, and is
internally vertex-disjoint from $X\cup Y$, especially, if $X=\{v\}$, then we write $(v,Y)$-dipath for short. For $u,v\in V(D)$, $dist_D(u,v)$ is defined as the length of the shortest dipath between $u$ and $v$ contained in $D$. We use $D[u,v]$ to denote a directed path from $u$ to $v$ and $D(u,v)$ to denote a directed path between $u$ and $v$ in $D$. Especially, $D[u,v]$ and $D(u,v)$ denote the vertex $u$ when $u=v$.
Let $\overleftarrow{D}$ be the digraph obtained from $D$ by reversing the  orientations of all arcs.

The following results were proved in \cite{JG,LRT,WM} and will be used in this paper.
\begin{lem}\cite{LRT}\label{DrD}
Let $F$ be a digraph, we have $\mc(F)=\mc(\overleftarrow{F})$.
\end{lem}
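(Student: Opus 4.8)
The plan is to exploit the two elementary symmetries underlying the definition of $\mc$: the invariance of the dichromatic number under reversal of all arcs, and the natural correspondence between subdivisions in a digraph and in its reversal. Once these are in place, the equality $\mc(F)=\mc(\overleftarrow{F})$ follows by a clean two-sided comparison, with no combinatorial work beyond bookkeeping.

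First I would record that $\ve(D)=\ve(\overleftarrow{D})$ for every digraph $D$. Reversing every arc sends each dicycle $u_0u_1\cdots u_{k-1}u_0$ to the dicycle $u_0u_{k-1}\cdots u_1u_0$, so a vertex set induces an acyclic subdigraph in $D$ precisely when it does in $\overleftarrow{D}$. Hence a mapping $c$ is an acyclic coloring of $D$ if and only if it is an acyclic coloring of $\overleftarrow{D}$, and the two dichromatic numbers coincide.

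Second I would record that $D$ contains a subdivision of $F$ if and only if $\overleftarrow{D}$ contains a subdivision of $\overleftarrow{F}$. A subdivision of $F$ is obtained by replacing each arc of $F$ with an internally disjoint dipath; reversing all arcs turns each such dipath into a dipath of the opposite orientation, producing exactly a subdivision of $\overleftarrow{F}$ inside $\overleftarrow{D}$. Since $\overleftarrow{\overleftarrow{D}}=D$ and $\overleftarrow{\overleftarrow{F}}=F$, this correspondence is a self-inverse bijection.

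Finally I would combine the two observations. Write $k=\mc(F)$. To see $\mc(\overleftarrow{F})\le k$, take any $D$ with $\ve(D)\ge k$; then $\ve(\overleftarrow{D})=\ve(D)\ge k$, so by the defining property of $\mc(F)$ the digraph $\overleftarrow{D}$ contains a subdivision of $F$, whence $D=\overleftarrow{\overleftarrow{D}}$ contains a subdivision of $\overleftarrow{F}$. Thus every $D$ with $\ve(D)\ge k$ contains a subdivision of $\overleftarrow{F}$, giving $\mc(\overleftarrow{F})\le k=\mc(F)$. Applying the same argument with $\overleftarrow{F}$ in place of $F$ and using $\overleftarrow{\overleftarrow{F}}=F$ yields the reverse inequality $\mc(F)\le\mc(\overleftarrow{F})$, so the two values are equal. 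There is no genuine obstacle here; the only point requiring care is the minimality quantifier hidden in the definition of $\mc$, which is exactly why I phrase the conclusion as the two inequalities $\mc(\overleftarrow{F})\le\mc(F)$ and $\mc(F)\le\mc(\overleftarrow{F})$ rather than attempting to match extremal witnessing digraphs directly.
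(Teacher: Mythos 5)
Your proof is correct and is exactly the standard symmetry argument: the paper states this lemma as a citation to Gishboliner, Steiner and Szab\'{o} without reproducing a proof, and the source's reasoning is the same two observations you use, namely that $\ve(D)=\ve(\overleftarrow{D})$ and that $D$ contains a subdivision of $F$ if and only if $\overleftarrow{D}$ contains a subdivision of $\overleftarrow{F}$. Your care in splitting the conclusion into the two inequalities $\mc(\overleftarrow{F})\le\mc(F)$ and $\mc(F)\le\mc(\overleftarrow{F})$ is exactly right and nothing further is needed.
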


\begin{lem}\cite{LRT}\label{disu}
Let $F$ be the disjoint union of two digraphs $F_1$ and $F_2$, then $\mc(F)\le \mc(F_1)+\mc(F_2)$.
\end{lem}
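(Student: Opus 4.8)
The plan is to reduce the statement to a localization argument for the dichromatic number. Write $k_1=\mc(F_1)$ and $k_2=\mc(F_2)$. Since $F=F_1\sqcup F_2$ has no arcs joining $F_1$ to $F_2$, a subdivision of $F$ is precisely a pair of vertex-disjoint subdivisions, one of $F_1$ and one of $F_2$. Hence it suffices to show that every digraph $D$ with $\ve(D)\ge k_1+k_2$ contains such a disjoint pair, which by definition of $\mc$ gives $\mc(F)\le k_1+k_2$.

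The engine of the argument is the subadditivity of $\ve$ under a vertex partition: if $V(D)=V_1\cup V_2$ is a partition, then $\ve(D)\le \ve(D[V_1])+\ve(D[V_2])$. This is immediate by taking optimal acyclic colorings of $D[V_1]$ and $D[V_2]$ on disjoint palettes and overlaying them; each resulting color class lies entirely in one part, so it induces an acyclic subdigraph of $D$. The same overlay idea, applied to a single added vertex given its own new color, shows that deleting one vertex lowers $\ve$ by at most one.

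With these tools the proof runs as follows. Among all induced subdigraphs $H$ of $D$ with $\ve(H)\ge k_1$, choose one with $|V(H)|$ minimum; this is possible since $\ve(D)\ge k_1+k_2\ge k_1$. Minimality forces $\ve(H-v)\le k_1-1$ for every $v\in V(H)$, and since deleting a vertex drops $\ve$ by at most one we get $\ve(H)=k_1$ exactly. Because $\ve(H)=k_1=\mc(F_1)$, the digraph $H$ contains a subdivision of $F_1$, and this subdivision is confined to $V(H)$. Now apply subadditivity with $V_1=V(H)$ and $V_2=V(D)\setminus V(H)$:
\[
k_1+k_2\le \ve(D)\le \ve(H)+\ve(D-V(H))=k_1+\ve(D-V(H)),
\]
so $\ve(D-V(H))\ge k_2=\mc(F_2)$, and therefore $D-V(H)$ contains a subdivision of $F_2$. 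This second subdivision avoids $V(H)$, hence is disjoint from the first, yielding a subdivision of $F$ in $D$ and proving $\mc(F)\le k_1+k_2$.

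The argument has no genuinely hard step; the only point requiring care is the localization in the third paragraph. A naive attempt that takes an arbitrary subdivision $S_1$ of $F_1$ inside $D$ and deletes $V(S_1)$ can fail, because $V(S_1)$ may carry almost all of the dichromatic number of $D$, leaving $\ve(D-V(S_1))$ too small to force an $F_2$-subdivision. Passing to a vertex-minimal subdigraph of dichromatic number exactly $k_1$ is what guarantees that removing its vertices costs only $k_1$ from $\ve(D)$, which is the crux of the whole proof.
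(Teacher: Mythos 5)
Your proof is correct and follows essentially the same route as the cited argument in \cite{LRT}: pass to a vertex-minimal induced subdigraph $H$ with $\ve(H)\ge \mc(F_1)$, use the fact that vertex deletion lowers $\ve$ by at most one to get $\ve(H)=\mc(F_1)$ exactly, and then apply subadditivity of $\ve$ over the partition $(V(H),\,V(D)\setminus V(H))$ to force a disjoint $F_2$-subdivision. Your closing remark correctly identifies the one genuinely delicate point, namely that deleting an arbitrary $F_1$-subdivision would not control the loss in dichromatic number.
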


\begin{lem}\cite{LRT}\label{Dici}
Let $D$ be $k$-dicritical. Then $\delta^+(D), \delta^-(D)\ge k-1$ and $D$ is strongly connected.
\end{lem}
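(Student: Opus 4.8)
The plan is to prove the two assertions separately. The degree bounds will follow from a standard greedy colour-extension argument exploiting dicriticality, while strong connectivity will follow from the observation that the dichromatic number decomposes over strong components. Throughout I use that if $D$ is $k$-dicritical then every proper subdigraph $D'$ of $D$ satisfies $\ve(D')\le k-1$, so in particular each such $D'$ admits an acyclic $(k-1)$-colouring.

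For the lower bound on $\delta^+(D)$, I would argue by contradiction. Suppose some vertex $v$ has $\delta^+_D(v)\le k-2$. Since $D-v$ is a proper subdigraph of $D$, fix an acyclic $(k-1)$-colouring $c$ of $D-v$ using colours $\{1,\dots,k-1\}$. The out-neighbours of $v$ receive at most $k-2$ distinct colours, so there is a colour $i^*\in\{1,\dots,k-1\}$ that appears on no out-neighbour of $v$; set $c(v)=i^*$. The key point is that any monochromatic dicycle in the extended colouring would have to pass through $v$ (as $c$ is acyclic on $D-v$), and hence would have to leave $v$ along an arc $(v,w)$ with $c(w)=i^*$, forcing $w$ to be an out-neighbour of $v$ coloured $i^*$ — a contradiction. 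Thus the extension is an acyclic $(k-1)$-colouring of $D$, contradicting $\ve(D)=k$. Hence $\delta^+(D)\ge k-1$. The bound $\delta^-(D)\ge k-1$ follows by the symmetric argument on in-neighbours, or more cleanly by applying the out-degree bound to $\overleftarrow{D}$, which is again $k$-dicritical because reversing all arcs preserves dicycles (and therefore acyclic colourings and the dicriticality hypothesis), and which satisfies $\delta^+(\overleftarrow{D})=\delta^-(D)$.

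For strong connectivity, I would first establish the structural fact that $\ve(D)=\max_i \ve(D_i)$, where $D_1,\dots,D_t$ are the strong components of $D$. This holds because every dicycle is a strongly connected subdigraph and therefore lies entirely within one strong component; consequently a colouring is acyclic if and only if its restriction to each component is acyclic, and since the components may reuse the same palette, the maximum is attained. Now suppose $D$ were not strongly connected, so $t\ge 2$. Then some component $D_j$ has $\ve(D_j)=\ve(D)=k$, yet $D_j$ is a \emph{proper} subdigraph of $D$, contradicting dicriticality. Hence $D$ is strongly connected.

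I do not expect a genuine obstacle here, as both halves are elementary; the one point requiring care is the colour-extension step, where one must verify precisely that choosing a colour absent from the out-neighbourhood (respectively in-neighbourhood) of $v$ blocks every potential new monochromatic dicycle through $v$, rather than merely avoiding a monochromatic arc as in the undirected critical-graph argument. The rest is bookkeeping about strong components and the reversal symmetry $\ve(\overleftarrow{D})=\ve(D)$.
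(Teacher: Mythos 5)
Your proof is correct, and nothing needs fixing: the greedy recolouring step (choosing a colour absent from the out-neighbourhood of a low-out-degree vertex, and checking that any new monochromatic dicycle would have to leave $v$ along a same-coloured out-arc) and the decomposition $\ve(D)=\max_i\ve(D_i)$ over strong components are both sound, including the reversal argument via $\overleftarrow{D}$. Note that the paper itself gives no proof of this lemma --- it is quoted from the cited reference of Gishboliner, Steiner and Szab\'{o} --- and your argument is exactly the standard one used there, the directed analogue of the classical facts about $k$-critical undirected graphs.
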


\begin{thm}\cite{JG,WM}\label{D-v} Let $k\in \mathbb{N}$, and let $D$ be a $k$-strongly connected digraph with $\delta^+(D)$, $\delta^-(D)\ge 2k$. Then there is $x \in V (D)$ such that $D'=D[V(D)\setminus \{x\}]$ is (also) $k$-strongly connected.
\end{thm}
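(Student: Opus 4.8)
The plan is to prove the statement by a fragment/submodularity argument, the directed analogue of the Chartrand--Kaugars--Lick theorem on removable vertices in $k$-connected graphs. For $\emptyset\neq X\subsetneq V(D)$ write $\partial^+(X)$ for the set of vertices outside $X$ having an in-neighbour in $X$, and $\partial^-(X)$ for the set of vertices outside $X$ having an out-neighbour in $X$. Since $D$ is $k$-strongly connected, $|\partial^+(X)|\ge k$ whenever $V(D)\setminus(X\cup\partial^+(X))\neq\emptyset$; call such an $X$ a \emph{positive fragment} when equality holds, and define \emph{negative fragments} symmetrically via $\partial^-$. A routine case check (splitting $V(D)$ according to membership in $X$ and $Y$) shows $X\mapsto|\partial^+(X)|$ is submodular, i.e.
\[ |\partial^+(X)|+|\partial^+(Y)|\ \ge\ |\partial^+(X\cap Y)|+|\partial^+(X\cup Y)|, \]
and likewise for $\partial^-$.

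First I would dispose of an easy case: if $D$ is in fact $(k+1)$-strongly connected, then deleting any single vertex leaves a $k$-strongly connected digraph, since removing $x$ and then any $k-1$ further vertices removes at most $k$ vertices in total, and $|V(D)|\ge 2k+1$ guarantees enough vertices remain. Hence assume $D$ is $k$- but not $(k+1)$-strongly connected, so a positive fragment exists. The key reformulation is that a vertex $x$ is \emph{non-deletable} (that is, $D-x$ is not $k$-strongly connected) if and only if $x\in\partial^+(Y)$ for some positive fragment $Y$: indeed $D-x$ fails to be $k$-strong exactly when it admits a directed separator of order $k-1$, which together with $x$ forms a directed separator of order $k$ in $D$ whose source side is a positive fragment having $x$ in its out-boundary. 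I then argue by contradiction, assuming every vertex is non-deletable, so that $V(D)=\bigcup_Y\partial^+(Y)$ over all positive fragments $Y$.

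The degree hypothesis enters next. If $X$ is a positive fragment with $S=\partial^+(X)$ and $Z=V(D)\setminus(X\cup S)$, then every $u\in X$ sends all its out-arcs into $X\cup S$, so $2k\le d^+(u)\le |X|-1+k$, giving $|X|\ge k+1$; symmetrically $|Z|\ge k+1$ using $\delta^-(D)\ge 2k$. Now choose a positive fragment $X$ of \emph{minimum} cardinality and pick any $w\in X$. By assumption $w$ is non-deletable, so there is a positive fragment $Y$ with $w\in\partial^+(Y)$; in particular $w\in X\setminus Y$, whence $X\cap Y\subsetneq X$. The crossing lemma (submodularity together with the bound $|\partial^+(\cdot)|\ge k$ on sets with non-empty rest) shows that if $X\cap Y\neq\emptyset$ and $X\cup Y$ still has non-empty rest, then both $X\cap Y$ and $X\cup Y$ are positive fragments; but then $X\cap Y$ is a positive fragment with $|X\cap Y|\le|X|-1<|X|$, contradicting minimality.

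The main obstacle is precisely the degenerate configurations that block this crossing: either $X\cap Y=\emptyset$, or one of $X\cap Y$, $X\cup Y$ together with its out-boundary already exhausts $V(D)$. These are where the threshold $2k$ does its real work, and ruling them out is the technical heart of the proof. I would exclude them using the size bounds $|X|,|Z|\ge k+1$ (and their analogues for $Y$ and its rest): for instance $X\cap Y=\emptyset$ forces $Y\subseteq S\cup Z$, and counting against $|Y|\ge k+1$ and $|S|=k$ pins down how $Y$ must meet $Z$, after which the symmetric \emph{negative}-fragment crossing—available because the reverse digraph $\overleftarrow{D}$ is again $k$-strongly connected with $\delta^+,\delta^-\ge 2k$ and has exactly the same deletable vertices—produces either a negative fragment strictly inside $Z$ or a direct arithmetic contradiction. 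Once every escape is shown to contradict minimality on the positive side or on the reversed (negative) side, the crossing yields a fragment strictly smaller than $X$, the desired contradiction; hence some vertex $x$ is deletable, i.e.\ $D-x$ is $k$-strongly connected.
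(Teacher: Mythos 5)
You should first note that the paper itself offers no proof of Theorem~\ref{D-v}: it is imported as a known result of Mader \cite{WM} (quoted via \cite{JG}), so there is no in-paper argument to compare against, and your attempt has to stand entirely on its own. Its setup is sound: the submodularity of $X\mapsto|\partial^+(X)|$, the equivalence ``$x$ is non-deletable iff $x\in\partial^+(Y)$ for some positive fragment $Y$'' (using $|V(D)|\ge 2k+1$), the bounds $|X|\ge k+1$ and $|Z|\ge k+1$ for a positive fragment $X$ with out-boundary $S$ and rest $Z$, and the non-degenerate crossing step (when $X\cap Y\neq\emptyset$ and $Z\cap Z_Y\neq\emptyset$, both $X\cap Y$ and $X\cup Y$ are positive fragments, contradicting minimality) are all correct and standard.

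The genuine gap is exactly where you place ``the technical heart'': the degenerate cases are not proved, and the repair you sketch demonstrably does not work as described. First, passing to $\overleftarrow{D}$ replaces the positive fragments $X,Y$ by the negative fragments $Z,Z_Y$, and the dual crossing needs $Z\cap Z_Y\neq\emptyset$ together with a non-empty rest of $Z\cup Z_Y$; but that rest contains $X\cap Y$ and, when $X\cap Y=\emptyset$, is confined to the boundary cells $(X\cap S_Y)\cup(S\cap Y)\cup(S\cap S_Y)$, a set of size at most $2k$ --- so the dual crossing inherits essentially the same degeneracies as the primal one (oppositely oriented fragments do not cross submodularly, and reversal mirrors rather than breaks the obstruction; indeed both $X\cap Y=\emptyset$ and $Z\cap Z_Y=\emptyset$ can hold simultaneously). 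Second, even when the dual crossing succeeds, producing ``a negative fragment strictly inside $Z$'' contradicts nothing: you minimized $|X|$ over positive fragments only, and a sub-fragment of $Z$ need not have fewer than $|X|$ vertices, since $|Z|$ can be arbitrarily large; making the minimality two-sided is easy (the hypotheses are self-dual), but the surviving configurations still evade the size bounds --- e.g.\ the rest $W$ of $Z\cup Z_Y$ satisfies $k+1\le |W|\le 2k$ if it is a fragment at all, which is perfectly consistent. No ``direct arithmetic contradiction'' of the kind you invoke is available at the threshold $2k$: this is precisely why Mader's proof in \cite{WM} is a substantially more delicate fragment analysis and not a single minimal-fragment crossing, and why the present paper cites the theorem rather than reproving it. As written, your argument proves the statement only under the additional assumption that some minimal fragment crosses non-degenerately with a witness fragment, which is the whole difficulty.
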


\section{The proof of Theorem \ref{FF}}
To prove Theorem \ref{FF}, we first prove the following result.

\begin{thm}\label{FQ}
Let $F$ be a digraph with $v_0\in V(F)$ and $Q\in \mathscr{S}_k$. Let $F^*$ be obtained from $F$ by adding $Q$, both arcs $(v_0, v_1)$, $(v_1, v_0)$, and (if $k\ge 2$) exactly one of the arcs $(v_0, v_k)$, $(v_k, v_0)$. Then $\mc(F^*)\le \mc(F) + k$.
\end{thm}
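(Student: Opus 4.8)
Set $m=\mc(F)+k$. The plan is to prove the contrapositive formulation: every digraph $D$ with $\ve(D)\ge m$ contains a subdivision of $F^*$. First I would replace $D$ by an $m$-dicritical subdigraph $D_0$; by Lemma~\ref{Dici} this $D_0$ is strongly connected and satisfies $\delta^+(D_0),\delta^-(D_0)\ge m-1=\mc(F)+k-1$. The construction then has two coordinated pieces sharing a single branch vertex $x_0$ (the image of $v_0$): a subdivision of $F$ with $v_0\mapsto x_0$, and, internally disjoint from it except at $x_0$, a subdivision of the ``gadget'' $G$, namely the closed structure formed by the digon $(v_0,v_1),(v_1,v_0)$, the good path $Q$ on $v_1,\dots,v_k$, and the single arc between $v_0$ and $v_k$. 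Note that, together with $v_0$, the underlying graph of $G$ is exactly the cycle $v_0v_1\cdots v_k v_0$, condition (b) makes $v_1$ an endpoint of $Q$ so that the gadget ``hangs'' off $x_0$, and condition (a) forces the digon edges of $Q$ to sit at pairwise non-adjacent positions.

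For the $F$-part I would reserve $k$ prospective branch vertices $x_1,\dots,x_k$ and delete exactly these from $D_0$. Since deleting one vertex lowers the dichromatic number by at most $1$, we keep $\ve(D_0-\{x_1,\dots,x_k\})\ge m-k=\mc(F)$, so this subdigraph contains a subdivision $S$ of $F$, and we arrange that $v_0$ is mapped to $x_0\notin\{x_1,\dots,x_k\}$. It is precisely this step that consumes the ``$+k$'' in the bound.

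For the gadget I would then realize $G$ using $x_1,\dots,x_k$ as its branch vertices and internal subdivision vertices lying outside $V(S)$, processing the edges in the path order $v_1,v_2,\dots,v_k$ of $Q$. At a single-arc edge of $Q$ I extend by one branch vertex along a dipath in the appropriate direction; at a digon edge I route a pair of oppositely directed dipaths, which is possible because condition (a) keeps these digon edges isolated so their two detours do not interfere with neighbouring ones; the digon $(v_0,v_1),(v_1,v_0)$ and the terminal arc between $x_0$ and $x_k$ are routed the same way, anchoring the gadget at $x_0$. The large out- and in-degrees of $D_0$ and its strong connectivity (Lemma~\ref{Dici}) are the tools for finding each successive dipath, and Theorem~\ref{D-v} would be used, after passing to a degree-controlled $k$-strongly-connected core, to guarantee the required number of internally disjoint routes in a Menger-type fashion. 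The union of $S$ with this gadget is then a subdivision of $F^*$, giving $\mc(F^*)\le m$.

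The hard part will be the coordination between the two pieces: the gadget is anchored at the fixed vertex $x_0$, whereas $S$ places $v_0$ wherever it likes and may occupy most of the neighbourhood of $x_0$, yet every connecting dipath of the gadget must avoid $V(S)$ except at $x_0$. Controlling the location of $x_0$ while simultaneously guaranteeing that the $k$ internally disjoint gadget paths can be pushed past $S$ is the crux; I expect to need either a rooted strengthening that produces the subdivision of $F$ with a prescribed image of $v_0$, or an interleaving of the two constructions that routes the gadget entirely through the vertices left unused by $S$. Verifying that the degree bound $m-1\ge\mc(F)+k-1$ leaves enough free access at $x_0$ after $S$ is removed is the delicate technical point on which the whole argument turns.
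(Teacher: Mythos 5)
There is a genuine gap, and it sits exactly where you yourself locate ``the crux'': anchoring the gadget at $x_0$ while keeping it disjoint from $S$ is not a delicate technical point to be patched later, it is the entire content of the theorem, and the tools you propose cannot deliver it. Your plan reserves $k$ vertices $x_1,\dots,x_k$ \emph{before} $S$ is found, then hopes to link them (and $x_0$, which is placed by $S$ arbitrarily) by internally disjoint dipaths avoiding $V(S)$. After $V(S)$ is removed, nothing whatsoever is known about the connectivity or the dichromatic number of what remains, so strong connectivity of the dicritical digraph $D_0$ gives you no routes past $S$. Moreover, the step ``passing to a degree-controlled $k$-strongly-connected core'' does not exist as an available tool: large $\delta^+$ and $\delta^-$ do not force highly strongly connected subdigraphs (the paper explicitly notes there is no digraph analogue of Theorem~\ref{cmd}), and Theorem~\ref{D-v} \emph{assumes} $k$-strong connectivity rather than producing it --- in the paper it is used only in Section 4, not here. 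Finally, a rooted strengthening that prescribes the image of $v_0$ is not proved anywhere and is not needed in the correct argument.

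The paper's mechanism is entirely different and resolves the coordination problem by construction rather than by routing. One chooses an acyclic $(M+k)$-coloring $c_0$ of $D$ (where $M=\mc(F)$) that \emph{maximizes} $|c_0^{-1}(\{1,\dots,k\})|$, and splits $D$ into $D_1=D[c_0^{-1}(\{1,\dots,k\})]$ with $\ve(D_1)=k$ and $D_2$ with $\ve(D_2)=M$; the ``$+k$'' is consumed by reserving $k$ \emph{colors}, not $k$ vertices. The subdivision $S$ of $F$ is found inside $D_2$, so disjointness from the gadget, which is built inside $D_1$, is automatic. The anchor at $x_0$ comes for free from the extremal choice of $c_0$: if $x_0$ lay on no dicycle $\hat C$ with $V(\hat C)\setminus\{x_0\}\subseteq c^{-1}(1)$, one could recolor $x_0$ into $D_1$ and enlarge $|c_0^{-1}(\{1,\dots,k\})|$, a contradiction --- no rooted subdivision theorem is required. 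Then a lexicographically minimal acyclic $k$-coloring $c$ of $D_1$, ordered by the vector $\mathbf{v}(c)_i=|N^+_D(x_0)\cap c^{-1}(i)|$, yields vertices $x_1,\dots,x_k\in N^+(x_0)$ with $c(x_i)=i$ such that consecutive classes are joined inside strong components $X_{i-1,i}$ of $D[c^{-1}(\{i-1,i\})]$; the gadget's dipaths, and in particular the dicycles realizing the digons of $Q$ (this is where condition (a), that the digons of $Q$ are vertex disjoint, is actually used, via a careful case analysis on dicycles $C_1,C_2$ inside $X_{i-1,i}$), are then extracted from these strong components. None of this recoloring machinery, which is the heart of the proof, appears in your proposal, so the argument as sketched cannot be completed.
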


\begin{proof} By the proof of Case (II) of Theorem 16  in \cite{LRT}, we only consider the case $(v_0, v_k)\in A(F^*)$. For convenience, let $\mc(F)=M$. We need to show that for any given digraph $D$ with $\ve(D)=M+k$, there is a subdivision of $F^*$ in $D$. Let $c_0: V(D)\rightarrow \{1,\ldots,M+k\}$ be an acyclic coloring which maximizes $|c_0^{-1} (\{1, \ldots, k\})|$. In the rest of the proof, let $D_1=D[c_0^{-1}(\{1,\ldots,k\})]$ and $D_2=D[c_0^{-1}(\{k+1,\ldots,M+k\})]$, we have that $\ve(D_1)=k$ and $\ve(D_2)=M$ since $c_0$ is an acyclic coloring of $D$.
Combining that $\mc(F)=M$, there is a subdivision $S$ of $F$ in $D_2$ and denote by $x_0$ the vertex in $S$ corresponding to $v_0$.
In \cite{LRT}, the authors defined a pre-order on the acyclic colorings of $D_1$ with respect to $x_0$ as follows.
For each acyclic $k$-coloring $c$ of $D_1$,  define a vector $\mathbf{v}(c)\in \mathbb{Z}^{k}$ with $\mathbf{v}(c)_i=|N^+_D(x_0)\cap c^{-1}(i)|$ for $i\in [k]$.
Now, consider the pre-order on the set of acyclic $k$-colorings of $D_1$, where $c_1\prec c_2$ iff $\mathbf{v}(c_1) <_{\mathrm{lex}} \mathbf{v}(c_2)$. Here $<_{\mathrm{lex}}$ denotes the lexicographical order on $\mathbb{Z}^{k}$. Let $c$ denote an acyclic $k$-coloring of $D_1$ that is minimal with respect to $\prec$. From {\bf Claim 2} in the proof of Theorem 16 in \cite{LRT}, we know that there are vertices $x_1,\ldots,x_k$ in $N^+(x_0)\cap V(D_1)$ such that
\begin{itemize}
\item $c(x_i)=i$ for $i\in \{1, \ldots, k\}$;

\item there is a dicycle $\hat{C}$ in $D$ containing $x_0$ and $x_1$ such that $V(\hat{C})\setminus \{x_0\}\subseteq c^{-1}(1)$;
\item there is a strong component of $D[c^{-1}(\{i-1,i\})]$ that contains both $x_{i-1}$ and $x_{i}$ for $i\in \{2, \ldots, k\}$.
\end{itemize}

For $2\leq i\leq k$, let $X_{i-1,i}$ be the strong component of $D[c^{-1}(\{i-1,i\})]$ that contains $x_{i-1}$ and $x_i$. If $\{(v_{i-1}, v_i),(v_i, v_{i-1})\}\not\subseteq E(Q)$, then we choose a directed path $P_{i-1,i}$ in
$X_{i,{i+1}}$ such that $P_{i-1,i}$ is directed from $x_{i-1}$ to $x_i$ if $(v_{i-1}, v_i)\in A(Q)$ and from $x_i$ to $x_{i-1}$
if $(v_{i}, v_{i-1})\in A(Q)$. Note that $\{(v_1, v_2),(v_2, v_1)\}\not\subseteq E(Q)$ since $d_Q^+(v_1)+d_Q^-(v_1)=1$.
Next, we show that there exist vertices $z_1,z_2,\ldots,z_k$ in $D_1$ which satisfy
\begin{itemize}
\item $z_1\in V(\hat{C})$.
\item for every $2\le i\le k$, there exists a dipath $Q_{i-1,i}$ in $D_1$ from $z_{i-1}$ to $z_i$ if $(v_{i-1},v_i)\in A(Q)$, and exists a dipath $Q_{i,i-1}$ in $D_1$
from $z_i$ to $z_{i-1}$ if $(v_{i},v_{i-1})\in A(Q)$.
\item the dipaths $Q_{i-1,i}$ and $Q_{j,j-1}$, $i,j \in\{2,\ldots, k\}$, are pairwise internally vertex-disjoint.
\item either $V(\hat{C})\cap V(Q_{1,2})=z_1$ or $V(\hat{C})\cap V(Q_{2,1})=z_1$.
\item   $V(\hat{C})\cap V(Q_{i-1,i}) =\emptyset$ and $V(\hat{C})\cap V(Q_{j,j-1}) =\emptyset$ for $i,j\in \{3,\ldots,k\}$.

\end{itemize}

Define $z_1\in V(\hat{C})$ to be the unique last vertex in $V(\hat{C})$ that we meet when traversing the trace of the path $P_{1,2}$ starting from $x_1\in V(\hat{C})$.  Without loss of generality, we assume that $(v_1,v_2)\in A(Q)$. Note that $P_{1,2}[z_1, x_2]$ is a dipath from $z_1$ to $x_2$. Now, we determine $z_2$ and $z_3$. Suppose that $\{(v_2, v_3),(v_3, v_2)\}\not\subseteq A(Q)$. Then for $j\in\{2,3\}$, we define $z_j$ to be the first vertex of
$P_{j,j+1}$ that we meet when traversing the trace of the dipath $P_{j-1,j}(z_{j-1}, x_j)$ starting from $z_{j-1}$, and let $Q_{i-1,i}=P_{i-1,i}(z_{i-1}, z_i)$ for $i\in\{2,3\}$.

Suppose that $\{(v_2, v_3),(v_3, v_2)\}\subseteq A(Q)$. Note that $\{(v_3, v_4),(v_4, v_3)\}\not\subseteq A(Q)$ since all the digons in $Q$ are vertex disjoint.
If there is a dicycle $C$ in $X_{2,3}$ such that  $V(P_{1,2}[z_1, x_2])\cap V(C)\neq\emptyset$ and $V(P_{3,4})\cap V(C)\neq\emptyset$, then let $z_2\in V(C)\cap V(P_{1,2}[z_1, x_2])$ such that $dist_{P_{1,2}}(z_1,z_2)$ is as small as possible and  let $z_3\in V(C)\cap V(P_{3,4})$ such that $dist_{P_{3,4}}(z_3,x_4)$ is as small as possible.
We have that $c(z_i)=i$ for $i\in \{2,3\}$ and $P_{3,4}[z_3, x_4]$ is a dipath between $z_3$ and $x_4$. We define $Q_{1,2}=P_{1,2}(z_1,z_2)$ and $Q_{2,3}\cup Q_{3,2}=C$. Now, assume that for any dicycle $C$ in $X_{2,3}$, we have that $V(P_{1,2}[z_1, x_2])\cap V(C)=\emptyset$ or $V(P_{3,4})\cap V(C)=\emptyset$. Since $X_{2,3}$ is a strong component, any vertex in $X_{2,3}$ lies in a dicycle. Let $C_1$ and $C_2$ be two dicyles in $X_{2,3}$ which contain $x_2$ and $x_3$ respectively.  We have that $C_1\neq C_2$, $V(P_{1,2}[z_1, x_2])\cap V(C_2)=\emptyset$ and $V(P_{3,4})\cap V(C_1)=\emptyset$ by the  assumption that  $V(P_{1,2}[z_1, x_2])\cap V(C)=\emptyset$ or $V(P_{3,4})\cap V(C)=\emptyset$ for any dicycle $C$ in $X_{2,3}$.

{\bf Case 1} $V(C_1)\cap V(C_2)\neq\emptyset$.

In this case, there are two vertices $u$ and $v$ (it may happen that $u=v$)  in $V(C_1)\cap V(C_2)$ satisfying  $C_2[x_3, u]$, $C_2[v, x_3]$, and $C_1$ are pairwise internally vertex disjoint.
Let $z_2\in V(C_1)\cap V(P_{1,2}[z_1, x_2])$ such that $dist_{P_{1,2}}(z_1,z_2)$ is as small as possible. Let $z'_3\in V(C_2[x_3,u]\cup C_2[v,x_3])\cap V(P_{3,4})$ such that $dist_{P_{3,4}}(z'_3,x_4)$ is as small as possible. If $(v_3,v_4)\in E(Q)$, then let $z_3=v$ and $C_2[v,z'_3]\cup P_{3,4}[z'_3, x_4]$ is a dipath from $z_3$ to $x_4$. If $(v_4,v_3)\in E(Q)$, then let $z_3=u$ and $P_{3,4}[x_4, z'_3]\cup C_2[z'_3, u]$ is a dipath from $x_4$ to $z_3$.  We define $Q_{1,2}=P_{1,2}[z_1,z_2]$ and $Q_{2,3}\cup Q_{3,2}=C_1$.

{\bf Case 2} $V(C_1)\cap V(C_2)=\emptyset$.

Since $X_{2,3}$ is a strong component, there is a shortest dipath $P_1$ from $C_1$ to $C_2$ and a shortest dipath $P_2$ from $C_2$ to $C_1$. Denote the initial and terminal vertices of $P_1$ by $u$ and $v$, the initial and terminal vertices of $P_2$ by $x$ and $y$, respectively.
Let $z'_3\in V(P_1\cup C_2)\cap V(P_{3,4})$ such that $dist_{P_{3,4}}(z'_3,x_4)$ is as small as possible. Now we need to consider the following cases.

{\bf Case 2.1} $z'_3\in V(C_2)$.

Denote by $w$ the unique first vertex in $V(P_1\cup C_1)\setminus\{v\}$ that we meet when traversing the trace of the dipath $P_2$ starting from $x$. We have that $P_2[x,w]\cup P_1[w,v]\cup C_2[v,x]$ is a dicycle when $w\neq y$ and $P_2\cup C_1[y,u]\cup P_1\cup C_2[v,x]$ is a dicycle when $w=y$.
Note that when $w=y$, we may assume $x_2\notin V(C_1[y,u])$ and $x_3\notin V(C_2[v,x])$,  otherwise it can be reduced to  Case 1.
Denote the above dicycle by $C_3$. Let $z'_2\in V(C_1\cup P_1\cup P_2[x,w])\cap V(P_{1,2}[z_1,x_2])$ such that $dist_{P_{1,2}}(z_1,z'_2)$ is as small as possible. Note that $z'_2\neq z'_3$ since $c(z'_2)=2$ and $c(z'_3)=3$.

Suppose that $w\neq y$.
If $z'_2\in V(C_1)$, then let $z_2=w$. We define $Q_{1,2}=P_{1,2}[z_1,z'_2]\cup C_1[z'_2,u]\cup P_1[u,w]$ and $Q_{2,3}\cup Q_{3,2}=C_3$.
If $z'_2\in V(P_1[u,w])$, then let $z_2=w$. We define $Q_{1,2}=P_{1,2}[z_1,z'_2]\cup P_1[z'_2,w]$ and $Q_{2,3}\cup Q_{3,2}=C_3$.
If $z'_2\in V(P_1[w,v]\cup P_2[x,w])$, then let $z_2=z'_2$.  We define $Q_{1,2}=P_{1,2}[z_1,z_2]$ and $Q_{2,3}\cup Q_{3,2}=C_3$.
Suppose that $w=y$. If $z'_2\in V(C_1)\setminus V(C_3)$, then let $z_2=w$. We define $Q_{1,2}=P_{1,2}[z_1,z'_2]\cup C_1[z'_2,w]$ and $Q_{2,3}\cup Q_{3,2}=C_3$.
If $z'_2\in V(C_3)$, then let $z_2=z'_2$ and so $c(z_2)=2$. We define $Q_{1,2}=P_{1,2}[z_1,z_2]$ and $Q_{2,3}\cup Q_{3,2}=C_3$.

%

Now, we define $z_3$. Suppose that $V(P_{3,4}(x_4,z'_3))\cap V(P_2[x,w]\cup C_2[v,x])=\emptyset$. If $(v_3,v_4)\in A(Q)$, then let $z_3=x$ and  $C_2[x, z'_3]\cup P_{3,4}[z'_3, x_4]$ is a dipath from $z_3$ to $x_4$.
If $(v_4,v_3)\in A(Q)$, then let $z_3=v$ and $P_{3,4}[x_4, z'_3]\cup C_2[z'_3, v]$ is a dipath from $x_4$ to $z_3$. Suppose that $V(P_{3,4}(x_4,z'_3))\cap V(P_2[x,w]\cup C_2[v,x])\neq\emptyset$, then let $z_3\in V(P_{3,4}(x_4,z'_3))\cap V(P_2[x,w]\cup C_2[v,x])$ such that $dist_{P_{3,4}}(z_3,x_4)$ is as small as possible and $P_{3,4}(z_3,x_4)$ is a dipath between $z_3$ and $x_4$. By the definition of $z'_3$, we have that $z_3\neq w$ and $c(z_3)=3$.

{\bf Case 2.2} $z'_3\in V(P_1)\setminus V(C_1)$.

If $w\neq y$, then one of the following holds.
 \begin{itemize}
 \item[(a)] There is a vertex $u_1\in V(P_1)\cap V(P_2)$ with $z'_3\in V(P_1[u,u_1])$ such that $P_2[u_1,y]$ is internally vertex disjoint with $P_1[u,u_1]$.
\item[(b)] There is a vertex $u_1\in V(P_1)\cap V(P_2)$ with $z'_3\in V(P_1[u_1,v])\setminus\{u_1\}$ such that $P_2[x,u_1]$ is internally vertex disjoint with $P_1[u_1,v]$.
\item[(c)] There are two distinct vertices $u_1,u_2\in V(P_1)\cap V(P_2)$ with $u_1\neq z'_3$ such that $P_1(u_1,u_2)$ is internally vertex disjoint with $P_2(u_1,u_2)$ and $z'_3\in V(P_1(u_1,u_2))$.
\end{itemize}

If $w=y$ or (a) holds, then let $C_1=C_1$ and $C_2=C_1[y,u]\cup P_1[u,u_1]\cup P_2[u_1,y]$. We refer $z'_3$ as $x_3$ and so it can be reduced to Case 1. Suppose that (b) or (c) holds, we have $P_2[x,u_1]\cup P_1[u_1,v]\cup C_2[v,x]$ is a dicycle or $P_1(u_1,u_2)\cup P_2(u_1,u_2)$ is a dicycle, respectively. Call the possible cycle $C_3$.  Let $C_1=C_1$ and $C_2=C_3$. We refer $z'_3$ as $x_3$ and so it can be reduced to  Case 2.1.

It is easy to check that $Q_{1,2}$, $Q_{2,3}$, $Q_{2,3}$, and $V(P_{3,4}(z_3, x_4))$  are pairwise internally vertex-disjoint.  Therefore, recursively, we can find the corresponding $z_4,\ldots,z_k$ with respect to $v_4,\ldots,v_k$.

Let $S^*$ be the subdigraph of $D$ formed by joining $S\subseteq D[Y_2]$, the pairwise distinct
vertices $z_1,\ldots,z_k$ and the connecting dipaths $Q_{i-1,i}$, $Q_{i,i-1}$, $i \in\{ 2,\ldots, k\}$, the two anti-parallel
directed paths $\hat{C}[x_0, z_1]$, $\hat{C}[z_1, x_0] $between $x_0$ and $z_1$ as well as the arc $(x_0, z_k)$. It follows
that $S^*$ is isomorphic to a subdivision of $F^*$.

This completes the proof of Theorem \ref{FQ}.
\end{proof}





By the definition of $\mathscr{F}$, we know that $\mathscr{F}^*\subseteq \mathscr{F}$. We call the second item the ear addition operation and the third item taking a subdigraph operation in the definition of $\mathscr{F}$. Therefore, every digraph in $\mathscr{F}$ can be obtained from $K_1$ by a sequence of operations consisting of the above two operations. We call  $F\in \mathscr{F}$  {\it maximal} if it is obtained from $K_1$ by only using the ear addition operation. By recursively using Theorem~\ref{FQ}, for $F\in \mathscr{F}$ being maximal, $\mc(F)=|V(F)|$.

{\bf Proof of Theorem \ref{FF}}.  Let $F\in \mathscr{F}$ and let $UG(F)$ be the graph obtained from $F$ by replacing every arc $(u, v)$ with the
edge $uv$ and deleting all multiple edges between every pair of vertices apart from one.  If $UG(F)$ is disconnected, then by Lemma~\ref{disu}, it suffices to consider the components of $UG(F)$. Therefore, in the following, we may assume that $UG(F)$ is connected.  We prove Theorem~\ref{FF} by induction on the number of  cycles contained in $UG(F)$. Denote by $\ell$ the number of  cycles contained in $UG(F)$. First, assume that $\ell=0$. Then $F\in \mathscr{F}^*$ and so $\mc (F)=|V(F)|$ by Theorem~\ref{FF*}.

Now, suppose that $\ell \ge 1$. We call a  cycle $C$ an outmost cycle if there is a vertex $u\in V(C)$ such that the other vertices of $C$ cannot reach any other  cycle in $UG(F)$ without passing $u$. Call $u$ the special vertex of $C$. Let $C$ be an outmost  cycle in $UG(F)$  with vertex set $\{v_0,\ldots, v_s\}$ and special vertex $v_0$.
Let $Y$ be the component of $F[V(F)\setminus \{v_1,\ldots, v_s\}]$ that contains $v_0$. By the induction hypothesis, $\mc(Y)=|V(Y)|$. Let $Y_1=F[V(Y)\cup \{v_1,\ldots, v_s\}]$. Therefore, by Theorem~\ref{FQ},  $\mc(Y_1)=|V(Y_1)|$ since $Y_1$ can be obtained from $Y$ and $F[\{v_1,\ldots,v_s\}]$ by the ear addition operation. For each $v_i$ with $i\in \{1,\ldots,s\}$, Let $F_{v_i}$ be the digraph that contains $v_i$ obtained from $F$ by deleting the arcs adjacent to $v_i$ contained in $F[\{v_1,\ldots,v_s\}]$. Since $C$ is an outmost cycle,
$F_{v_i}$ is a spanning subdigraph of a biorientation of some tree rooted at ${v_i}$, say $\lrT_{v_i}$. Note that $\lrT_{v_i}$ can be seen as a maximal digraph of $\mathscr{F}$ obtained from ${v_i}$ by a sequence of ear addition operation (in the case, $P_k$ is an isolate vertex in the definition of $\mathscr{F} $) for $i\in \{1,\ldots,s\}$. Let $T=\lrT_{v_1}\cup\cdots \cup \lrT_{v_s}$.
Denote by $\hat{F}$ the digraph obtained from $Y_1$ and $V(T)\setminus\{v_1,\ldots,v_s\}$ by a corresponding sequence of ear addition operation  of each $\lrT_{v_i}$,  where $i\in \{1,\ldots,s\}$.  Thus, $\mc(\hat{F})=|V(\hat{F})|$ by recursively using Theorem~\ref{FQ}.
Since $F$ is a spanning subdigraph of $\hat{F}$,  $\mc(F)\le \mc(\hat{F})$. Therefore, $\mc(F)=|V(F)|$ since $\mc(F)\ge|V(F)|$.
$\square$

\section{Subdigraphs of the biorientation of $C_4$}
Let $\mathcal{K}=\{K| K$ be a proper subdigraph of $\lrC_4$ and $K\not\cong H_3$\}, where $H_3$ is as shown in Figure~\ref{C4--}. In this section, we show that any graph in $\mathcal{K}$  is mader perfect. For any $K\in \mathcal{K}$, we have $K\in \mathscr{F}$ when $K\notin \{H_1, \overleftarrow{H}_1, H_2,\overleftarrow{H}_2\}$, where $H_1$ and $H_2$ are as shown in Figure~\ref{C4--}. To prove that any graph in $\mathcal{K}$  is mader perfect, by Theorem \ref{FF} and Lemma~\ref{DrD}, it suffices to prove that if $K\in \{H_1,H_2\}$, then $\mc(K)=4$.

\begin{figure}[htbp]
  \centering
    \includegraphics[width=12cm]{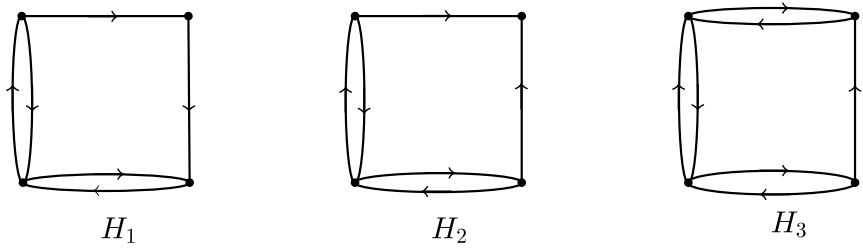}\\
  \caption{Three subdigraphs of the biorienitation of $C_4$}\label{C4--}
\end{figure}

\begin{thm}\label{SH1=4}
Let  $K\in \{H_1,H_2\}$, then $\mc(K)=4$.
\end{thm}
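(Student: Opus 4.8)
The lower bound is immediate and I would dispose of it first: $\ve(\lrK_3)=3$, and $\lrK_3$, having only three vertices, contains no subdivision of the four–vertex digraph $K$ (a subdivision needs four distinct branch vertices), so $\mc(K)\ge 4$. Everything therefore lies in the upper bound $\mc(K)\le 4$. To prove it I would take any $D$ with $\ve(D)\ge 4$, replace it by an inclusion–minimal subdigraph of dichromatic number $4$, and use Lemma~\ref{Dici} to assume $D$ is $4$-dicritical, hence strongly connected with $\delta^+(D),\delta^-(D)\ge 3$; a subdivision of $K$ in this $D$ is one in the original. By Lemma~\ref{DrD} it is enough to handle $H_1$ and $H_2$ themselves.

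The guiding picture is that each of $H_1,H_2$ is the bidirected path $\lrP_3$ on $a,b,c$—the two adjacent digons $ab$ and $bc$—with a fourth vertex $d$ attached so as to close the four–cycle: in one case $d$ lies on a directed path between the two ends, contributing arcs $c\to d$ and $d\to a$, and in the other $d$ is a common head of $a\to d$ and $c\to d$. Since $\lrP_3\in\mathscr{F}$, Theorem~\ref{FF} gives $\mc(\lrP_3)=3<4\le\ve(D)$, so $D$ contains a subdivision $S$ of $\lrP_3$: branch vertices $A,B,C$, a dicycle $O_1$ through $A$ and $B$, and a dicycle $O_2$ through $B$ and $C$, with $O_1$ and $O_2$ meeting only in $B$. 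First I would choose such an $S$ with as few vertices as possible, so as to leave the rest of $D$ free. What remains is to route, internally disjoint from $S$, either one closing dipath from $C$ to $A$ of length at least two (its first internal vertex becoming the image of $d$) or two closing dipaths, from $A$ and from $C$, meeting only at a common head $D$.

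To build these closing routes I would lean on strong connectivity together with two devices. The first is the extremal (lexicographic) colouring and the dicycle–tracing bookkeeping from the proof of Theorem~\ref{FQ}, which is exactly designed to peel off a dipath living in a reserved colour class and hence off a prescribed core. The second is Theorem~\ref{D-v}: since $\delta^+(D),\delta^-(D)\ge 3\ge 2$, I may delete vertices while keeping $D$ strongly connected, creating room to reroute. Concretely, I would start from any dipath from $C$ to $A$, follow it to its first vertex on $V(S)$, and—if that vertex is interior to $O_1$ or $O_2$—splice onto the dicycle and continue to $A$ along the arc of $O_1$ that avoids $B$, then verify the spliced route is internally disjoint from $S$. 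For the sink–type target the same routing is run out of both $A$ and $C$, with the extra requirement that the two routes be forced to a first common vertex $D$, which I would arrange by an out–neighbourhood/merging argument or by reserving one colour class to host $D$ and the path interiors.

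The hard part will be the \emph{internal disjointness} of the closing routes from the two–digon core: a badly chosen $\lrP_3$-subdivision can separate $C$ from $A$ once the interior of $S$ is removed, so both $S$ and the routes must be chosen extremally and the splicing checked case by case (including the degenerate situation where the only short connection from $C$ to $A$ is a single arc, which the degree bound $\delta^\pm\ge 3$ lets me lengthen). This difficulty also explains why $H_3$ is excluded: its core is a subdivision of $\lrP_4$, which by Theorem~\ref{FF} already uses the entire budget $\mc(\lrP_4)=4$, so there is no dichromatic room left to pay for the closing arc and the scheme is expected to fail precisely there.
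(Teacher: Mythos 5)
Your reductions (the lower bound via $\lrK_3$, passing to a $4$-dicritical counterexample with $\delta^+,\delta^-\ge 3$ via Lemma~\ref{Dici}, and using Lemma~\ref{DrD} to handle reversals) all match the paper, and starting from a subdivision of $\lrP_3$ is natural since $\mc(\lrP_3)=3$. But the entire weight of your argument rests on the step you yourself label ``the hard part,'' and nothing you propose actually delivers it. Strong connectivity gives \emph{some} dipath from $C$ toward $A$, but your greedy splicing breaks down as soon as this path first meets $S$ in the interior of $O_2$, or on the $A$-to-$B$ segment of $O_1$: there is then no way to ``continue to $A$ along the arc of $O_1$ that avoids $B$'' without reusing vertices of the core, and re-rooting the subdivision at the hit vertex only rescues the one favorable case (a hit on the $B$-to-$A$ segment), with no termination guarantee if you iterate. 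The auxiliary devices you invoke do not fill this hole: the lexicographic-colouring machinery of Theorem~\ref{FQ} produces paths inside prescribed pairs of colour classes extending a cycle through a fixed vertex, not a path between two prescribed branch vertices of an already-chosen subdivision avoiding its interior; and Theorem~\ref{D-v} is a single-vertex deletion statement requiring $\delta^+,\delta^-\ge 2k$, so you cannot iterate it to ``delete vertices while keeping $D$ strongly connected'' (degrees drop after each deletion), nor is it clear how deleting vertices would help route anything. The $H_2$ case, where two routes must merge at a first common head, is left entirely to an unspecified ``merging argument.'' So as written this is a strategy outline with a genuine gap at its central step.

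The paper's proof avoids the routing problem altogether, and the contrast shows the missing idea. It chooses (via Theorem~\ref{D-v}) a vertex $x$ such that $D'=D-x$ is strongly connected, takes an acyclic $3$-coloring $(V_1,V_2,V_3)$ of $D'$, and builds a reachability partition ($B_i$, $B_i^1$, $B_i^2$, $A_i$, $E$, and the sets $T_0,T_1,T_2,T_3$) with respect to $x$. The forbidden subdivision is used exactly once, in Claim~1: if some vertex $v$ lay in $A_1\cap A_2$, one gets two dicycles through $x$, one inside $D[\{x\}\cup B_1]$ and one inside $D[\{x\}\cup B_2]$, which are disjoint apart from $x$ \emph{automatically}, because $B_1\subseteq V_1$ and $B_2\subseteq V_2$ are disjoint colour classes; these two dicycles are the two digons, $x$ is their common vertex, and the defining dipaths to and from $v$ (which avoid $x$ and the opposite $B$-set by construction) supply the closing routes — the cases $H_1$ and $H_2$ differ only in the direction of one of these paths. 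The internal disjointness you would have to fight for is thus obtained for free from the colouring, and the remainder of the proof is a recolouring argument showing $D$ would admit an acyclic $3$-coloring, contradicting $\ve(D)=4$. In short: rather than extending a pre-chosen $\lrP_3$-subdivision by disjoint paths, the proof manufactures the subdivision out of the colour structure itself, with the deleted vertex $x$ playing the role of the shared digon vertex; without some such mechanism your plan cannot be completed as stated.
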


\begin{proof}
 Let $K\in \{H_1,H_2\}$.  Since $\mc(K)\ge|V(K)|$, $\mc(K)\geq4$. We prove Theorem \ref{SH1=4} by contradiction. Suppose that the assertion
is false, i.e., $\mc(K)>4$. Then there is a digraph $D$ with $\ve(D)=4$ that does not contain a subdivision of $K$.  Among all
counterexamples we choose $D$ so that first $|V(D)|$ is minimum,  and then $|A(D)|$ is minimum. By the choice of $D$, $D$ is connected, 4-dicritical. By Lemma~\ref{Dici} and Theorem~\ref{D-v}, we know that $D$ is strongly connected and there is a vertex $x$ in $D$ such that $D'=D[V(D)\setminus\{x\}]$ is strongly connected. Since $D$ is 4-dicritical,
$\ve(D')=3$. In the following let $c=(V_1,V_2,V_3)$ be an acyclic 3-coloring of $D'$,  where $V_1$, $V_2$ and $V_3$ are the color classes. In order to show that there is an acyclic 3-coloring of $D$, we define the following vertex partition of $D'$ with respect to $x$.
\begin{figure}[thbp]
  \centering
  \includegraphics[width=8cm]{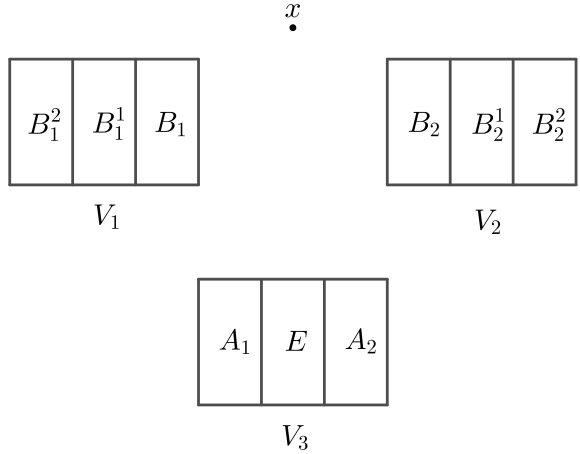}\\
  \caption{The vertex partition of $D'$.}\label{vpod}
\end{figure}

\begin{itemize}
\item[(a)] $B_i\subseteq V_i$ consists of all the vertices that are contained in the same strong component with $x$ in $D[\{x\}\cup V_i]$ for $i\in \{1,2\}$.

\item[(b)] $B^1_i\subseteq V_i$ and $A_i\subseteq V_3$ consist of all the vertices which can be reached by some vertex of $B_i$ in $D$ without passing through any vertex in  $\{x\}\cup B_{3-i}$ for $i\in \{1,2\}$.

\item[(c)] $B^2_i=V_i\setminus(B^1_i\cup B_i)$  for $i\in \{1,2\}$ and  $E=V_3\setminus(A_1\cup A_2)$.
\end{itemize}

Now, let $V'_1=B_2\cup B^1_2\cup A_1\cup E$, $V'_2=B_1\cup B^1_1\cup A_2$ and $V'_3=\{x\}\cup B^2_2$. In order to construct an acyclic 3-coloring $\tilde{c}$ of $D$, we need to consider the vertices in $B^2_1$.
Denote by $T_0$ the set of all the vertices in $B^2_1$ that are contained in some dicycle in the digraph $D[B^2_1\cup V'_3]$ and let $T_3= B^2_1\setminus T_0$. Further, denote by $T_1$ the set of all the vertices in $T_0$ that are not contained in any dicycle in the digraph $D[T_0\cup E]$ and let $T_2=T_0\setminus T_1$.
An 3-coloring $\tilde{c}=(\tilde{V}_1,\tilde{V}_2, \tilde{V}_3)$ of $D$ can be given by setting $\tilde{V}_i=V'_i\cup T_i$, where $\tilde{V}_i$ is a color class of $\tilde{c}$ for $i\in \{1,2,3\}$. If $\tilde{c}$ is acyclic, then it leads to a contradiction as $\ve(D)=4$. Thus, we finish the proof by showing that $D[\tilde{V}_i]$ is acyclic for $i\in \{1,2,3\}$. In the following, we first list some useful claims.
\vspace{0.2cm}

{\bf Claim 1.} $A_1\cap A_2=\emptyset$ for $K\in \{H_1,H_2\}$.

\noindent{\bf Proof.} Suppose that there is a vertex $v\in A_1\cap A_2$. By the partition, there is a shortest dipath $P_1$ from $B_1$ to $v$ without passing through $\{x\}\cup B_2$ in $D$. Let $u\in V(P_1)\cap B_1$.
If $K=H_1$, then let $P_2$ be a shortest dipath from $v$ to $B_2$ without passing through $\{x\}\cup B_1$ in $D$. If $K=H_2$, then let $P_2$ be a shortest dipath from $B_2$ to $v$ without passing through $\{x\}\cup B_1$ in $D$.
Denote the other end of $P_2$ by $w$, and so  $w\in B_2$. Since $B_1$ and $B_2$ are acyclic, there are dicycles $C_u\subset D[\{x\}\cup B_1]$ and $C_w\subset D[\{x\}\cup B_2]$ such that $\{x,u\}\subseteq V(C_u)$ and $\{x,w\}\subseteq  V(C_w)$. Therefore, $C_u\cup C_w\cup P_1\cup P_2$ contains a subdivision of $K$, a contradiction. Thus, $A_1\cap A_2=\emptyset$ for $K\in \{H_1,H_2\}$. $\blacksquare$
\vspace{0.2cm}

{\bf Claim 2.} $D[V'_i]$ is acyclic for $i\in \{1,2,3\}$ and $K\in \{H_1,H_2\}$.

\noindent{\bf Proof.} By the partition, it is obvious that $D[V'_3]$ is acyclic. Suppose that $D[V'_2]$ contains a dicycle $C$. Then $V(C)\cap (B_1\cup B^1_1)\neq\emptyset$ and $V(C)\cap A_2\neq\emptyset$ and so there is a  dipath $P$ from $B_1$ to $V(C)\cap A_2$ without passing through $\{x\}\cup B_2$. This implies that $(V(C)\cap A_2)\subseteq A_1$, a contradiction.
Suppose that $D[V'_1]$ contains a dicycle $C$. Similarly, $D[V'_1\setminus E]$ is acyclic. Therefore, $V(C)\cap E\neq\emptyset$. Since $V_3$ is acyclic, $V(C)\cap (B_2\cup B^1_2)\neq\emptyset$. Let $u\in B_2\cup B^1_2$, then there is a dipath from $B_2$ to $V(C)\cap E$ without passing through $\{x\}\cup B_1$. This implies that $(V(C)\cap E)\subseteq A_2$, a contradiction. Hence, $D[V'_i]$ is acyclic for $i\in \{1,2,3\}$ and $K\in \{H_1,H_2\}$.$\blacksquare$
\vspace{0.2cm}

{\bf Claim 3.} For each vertex $v$ in $T_0$, there is a dipath from $v$ to $B^2_2$ contained in $D[T_0\cup B^2_2]$.

\noindent{\bf Proof.} Suppose not, then there is a vertex $v\in T_0$ such that there is no dipath from $v$ to $B^2_2$ contained in $D[T_0\cup B^2_2]$. Since $D[T_0\cup \{x\}]$ is acyclic and $v$ is contained in a dicycle $C$ in $D[T_0\cup V'_3]$, there is a dipath between $v$ and  $B^2_2$ contained in $D[T_0\cup B^2_2]$. By the assumption, there is a dipath $P$ from $B^2_2$ to $v$ contained in $D[T_0\cup B^2_2]$. Let $u\in V(P)\cap V(B^2_2)$.  Consider a shortest dipath $P_1$ in $D'$ from $B_1$ to $u$, the existence of $P_1$ can be guaranteed since $D'$ is strongly connected.  If $V(P_1)\cap B_2=\emptyset$, then
there is diwalk $P_1\cup P(u,v)$ from $B_1$ to $v$ without passing through $\{x\}\cup B_2$ which implies that $v\in B^1_1$, a contradiction.
Hence, we may assume that $V(P_1)\cap B_2\neq \emptyset$. Since $P_1$ is selected to be shortest,  there is a dipath $P'_1\subset P_1$ from $B_2$ to $u$ without passing through $\{x\}\cup B_1$  contradicting the fact that $u\in B^2_2$. Therefore, for each vertex $v$ in $T_0$ there is a dipath from $v$ to $B^2_2$ contained in $D[T_0\cup B^2_2]$.$\blacksquare$
\vspace{0.2cm}

By {\bf Claim 1}, we know that $\{\tilde{V}_1, \tilde{V}_2, \tilde{V}_3\}$ is a partition of $V(D)$.  Now, we prove that $D[\tilde{V}_i]$ is acyclic for $i\in \{1,2,3\}$.
Obviously, by the definition of $T_3$ and {\bf Claim 2}, $D[\tilde{V}_3]$ is acyclic. To get a contradiction, suppose that there is a dicycle $C_i$ contained in $D[\tilde{V}_i]$ for $i\in \{1,2\}$. For $i\in \{1,2\}$, we have that $D[V'_i]$ is acyclic by {\bf Claim 2}, and since  $D[T_1\cup E]$ and $D[T_2]$ are acyclic, we have that $V(C_i)\cap T_i\neq\emptyset$, $V(C_1)\cap (V'_1\setminus E)\neq\emptyset$  and $V(C_2)\cap V'_2\neq\emptyset$.
Let $v_i\in V(C_i)\cap T_i$, $u_1\in V(C_1)\cap (V'_1\setminus E)$ and $u_2\in V(C_2)\cap V'_2$ for $i\in \{1,2\}$.

We first consider the vertex set $\tilde{V}_1$. Suppose that $u_1\in B_2\cup B^1_2$. By {\bf Claim 3}, there is a dipath $P_1$ from $v_1$ to $B^2_2$ contained in $D[T_0\cup B^2_2]$. Let $w_1\in V(P_1)\cap B^2_2$. Therefore, by the definition of the partition,  there is a diwalk $W_1=C_1[u_1,v_1]\cup P_1$ without passing through $\{x\}\cup B_1$. The existence of the diwalk $W_1$ implies that $w_1\in B^1_2$, a contradiction.
Thus, $V(C_1)\cap (B_2\cup B^1_2)=\emptyset$ and so $u_1\in A_1$. By the definition of the partition, there is a dipath $P'_1$ from $B_1$ to $u_1$ without passing through $\{x\}\cup B_2$.  Thus, there is a diwalk $W'_2=P'_1\cup C_1[u_1,v_1]$ from $B_1$ to $v_1$ without passing through $\{x\}\cup B_2$. This implies that $v_1\in B^1_1$ contradicting the fact that $v_1\in B^2_1$. Therefore, $D[\tilde{V}_1]$ is acyclic.

Now, we consider the vertex set $\tilde{V}_2$. Suppose that $u_2\in B_1\cup B^1_1$, then there is a dipath containing $C_2[u_2,v_2]$ from $B_1$ to $v_2$ without passing through $\{x\}\cup B_2$. It implies that $v_2\in B^1_1$, a contradiction.
Thus, $V(C_2)\cap ( B_1\cup B^1_1)=\emptyset$ and so $u_2\in A_2$. Note that $v_2$ is contained in some dicycle $C'$ in $D[T_2\cup E]$. Let $w_2\in V(C')\cap E$. Then  $C'[v_2,w_2]$ a dipath  from $v_2$ to $w_2$ without passing through $\{x\}\cup B_1$.
Thus, there is a diwalk $W_2=C'[u_2,v_2]\cup C'[v_2,w_2]$ from $u_2$ to $w_2$ without passing through $\{x\}\cup B_1$. Combining that there is a dipath from $B_2$ to $u_2$ without passing through $\{x\}\cup B_1$, it implies that $w_2\in A_2$ contradicting the fact that $w_2\in E$. Therefore, $D[\tilde{V}_2]$ is acyclic.

Finally, we  conclude that $D$ admits an acyclic 3-coloring, which is a contradiction  to $\ve(D)=4$. So the assumption $\mc(K)>4$ is not true. Hence, $\mc(K)=4$. This completes the proof of Theorem~\ref{SH1=4}.
\end{proof}

\end{document}